\newcommand{\bC}{\mathbb{C}}
\newcommand{\bZ}{\mathbb{Z}}
\newtheorem{satz}{Satz}
\newtheorem{lemma}[satz]{Lemma}
\newtheorem{proposition}[satz]{Proposition}
\newtheorem{theorem}[satz]{Theorem}
\newtheorem{corollary}[satz]{Corollary}
\newtheorem{definition}[satz]{Definition}
\theoremstyle{remark}
\newtheorem{example}[satz]{Example}
\newtheorem{remark}[satz]{Remark}
\newtheorem{notation}[satz]{Notation}
\begin{document}

\title[Noncommutative Chern form on \'etale groupoid is closed]
{Noncommutative Chern form on \'etale groupoid is closed}

\author[W. Zhang]{Wen Zhang}
\address{Department of Mathematics\\Jilin University\\Changchun 130012\\P. R. China}
\email{wenzhang19@mails.jlu.edu.cn}

\begin{abstract}
We use bisection to provide an algebraic proof that the Chern form on the convolution algebra of an \'etale groupoid is closed.
\end{abstract}

\keywords{Noncommutative Chern character, \'etale groupoid, bisection, superconnection}

\maketitle


\section{Introduction}
 In this paper, we give a new proof that the Chern form in \cite[(6.43)]{gorokhovsky2003local} is closed.
Gorokhovsky and Lott provided a superconnection heat kernel proof, in the style of Bismut, of Connes' index theorem (cf. \cite[Theorem 6]{gorokhovsky2003local}). They considered a smooth \'etale groupoid $G$ acting on a $G$-proper manifold and a $G$-Dirac type operator $D$. As pointed out in \cite{LeichtnamPiazza+2005+169+233}, Connes' index theorem for $G$-proper manifold unified most of the existing index theorems at that time under a single statement. We first review index theory and then explain Gorokhovsky and Lott's work.

As a starting point, we recall the family index theorem. The family index theorem describes a family of elliptic pseudodifferential operators that depend continuously on a parameter from some compact space. The index of a family of operators is an element in the 
$K$-theory of the base space. A special case occurs when the kernel and the cokernel are vector bundles. In such a case, the index is the difference of the classes of these bundles (\cite{atiyah1971index}).

One may reformulate this situation by replacing the base space $B$ by the $C^*$-algebra $C(B)$ and the family of operators by an operator on a $C(B)$-module. The index is then in the $C^*$-algebraic $K$-theory $K_0(C(B))$, which is naturally isomorphic to $K^0(B)$. For general $C^*$-algebras, the Mishchenko-Fomenko index theorem (\cite{mishchenko1979index}) for elliptic operators on Hilbert $C^*$-modules formalized and generalized this point of view. The index of such an elliptic operator is an element in the $K$-theory of the $C^*$-algebra.

The index problem is simplified by considering the Chern character of $Ind(D)$, which lies in de Rham cohomology $H^*(B)$. It can be proven that $\operatorname{Ch}(Ind(D))$ equals the cohomology class of the superconnection Chern form (cf. \cite[Theorem 9.33]{berline2003heat}). Then Bismut's family index theorem follows from a canonical proof using superconnection formalism.
For more details, see \cite{berline2003heat, MR2344015}.

Gorokhovsky and Lott (\cite{gorokhovsky2003local}) extended the local family index theorem to the convolution algebra of an \'etale groupoid.
According to Connes' index theorem, the index of $D$ belongs to the $K$-theory group $K_0(C_r^*(G))$. Since $C_r^*(G)$ lacks a naturally dense smooth subalgebra stable under the holomorphic functional calculus, Gorokhovsky and Lott overcome this problem by defining $Ind(D)$ as the $K$-theory group element represented by the difference between the index projection and a standard projection. In their setting, $Ind(D)$ is an element in the $K$-theory of a certain algebra of smoothing $C^*(G)$-operators. 

 They developed homology computations and defined a graded differential algebra (GDA, for short) that can be considered a space of noncommutative forms in the general \'etale groupoid case, whose cohomology generalized the de Rham cohomology.
They also constructed the noncommutative connection, traces, and the Chern character 
 \begin{equation}\label{2024.07.02}
\operatorname{Ch}: K_0(End_{C_c^\infty(G)}^\infty(\Gamma_c^\infty(P; E))) \longrightarrow \Omega_c^{\bullet}(G)_{\rm{Ab}}.
\end{equation}
necessary for a noncommutative local index theory.

The relationship between the Chern character of the index (paired with a suitable closed graded trace) and the superconnection Chern character (paired with the same trace) is explained in \cite[Section 5]{gorokhovsky2003local}. See also \cite[Section 8]{LeichtnamPiazza+2005+169+233}, this is a key step in finishing the proof of the index theorem.
Gorokhovsky and Lott claimed that the superconnection Chern form is closed, and this can be proved using similar arguments as in Heitsch's paper (\cite{MR1361580}). However, the proof presented in \cite{MR1361580} relies on many analytical techniques that are not very clear in terms of their extension to noncommutative cases.
 On the other hand, in the crossed product case, Leichtnam and Piazza (\cite[Proposition 12]{LeichtnamPiazza+2005+169+233}) provide a detailed algebraic proof parallel to the classical proof of \cite[Chapter 9]{berline2003heat}.

In this paper, we mainly provide an algebraic proof that the superconnection Chern form in \cite[(6.43)]{gorokhovsky2003local} is a closed form in the \'etale groupoid case. As Leichtnam and Piazza (\cite{LeichtnamPiazza+2005+169+233}) write, such a result is interesting and reassuring in its own right.
Our motivation comes from studying the extension of
\cite{so2017non, LeichtnamPiazza+2005+169+233} from \'etale crossed product groupoids to general \'etale groupoids.
This extension enables us to define a noncommutative analytic torsion form on the convolution algebra of an \'etale groupoid. The noncommutative analytic torsion form will be further explored in an upcoming paper.

Recall that the crossed product groupoid is the set $B\times \Gamma$. Using this fact, the authors of the above papers decompose any elements in $\Omega_c^{\bullet}(B\rtimes \Gamma)$ as a sum $\sum_{d g_{(k)} g} \omega^{d g_{(k)} g} d g_{(k)} g$ and then perform explicit computations.  In the \'etale case, 
we find a similar decomposition using bisections and demonstrate that similar computations hold.

The rest of this paper is organized as follows.
 In Sec. \ref{Setup}, we provide some background information.
 We consider a fiber bundle $P\rightarrow G^{(0)}$ and a vector bundle over $P$. We introduce a $G$ action on these bundles and a $G$-invariant metric. Additionally, we explain the Bismut bundle and define superconnections.
 
In Sec. \ref{Convolution Algebra},
We first define the convolution algebra and present 
the bisection method along with related results.
Then we generalize the definition of vector representation, which leads to a module structure. 
We use the groupoid of germs, a special feature of \'etale groupoids, to define a $G$-invariant connection and extend it to a noncommutative connection, along with its Chern character. 

In Sec. \ref{Noncommutative connection and trace on a Bismut bundle}, 
we extend Sec. \ref{Convolution Algebra} to the Bismut bundle case and clarify \cite{gorokhovsky2003local} by writing down explicit formulas.
We rewrite a class of $\Omega_c^{\bullet}(G)$-linear smoothing operators to facilitate direct calculations and derive their necessary conditions.
Then we present the main results of this paper.

In Sec. \ref{Proof of Theorem}, we prove Theorem \ref{2024.03.04},
which states
$$(d_1+d_2)(Tr_{<e>}(K))=Tr_{<e>}([D_B+\nabla^{0,1}, K]).$$
This theorem extends \cite[Lemma 3.19]{so2017non} and
\cite[Lemma 14]{LeichtnamPiazza+2005+169+233} to the \'{e}tale case, thereby 
constituting a generalization of \cite[Lemma 9.15]{berline2003heat}.

Our findings help clarify the translation from étale crossed product groupoids to general étale groupoids, enabling direct operations on the latter.

\section{Setup}\label{Setup}
This section will first focus on \'etale groupoids and then introduce the geometric settings from \cite{gorokhovsky2003local, so2017non}.
\subsection{\'Etale groupoid acting on a vector bundle}
\begin{definition}
A topological groupoid $G$ is called an \'etale groupoid if the target map $t: G\rightarrow G^{(0)}$ is a local homeomorphism.
\end{definition}

Let $G$ be a smooth Hausdorff proper \'etale groupoid over $G^{(0)}$ with source map s and target map t. For $x\in G^{(0)}$, let $G^x=t^{-1}(x)$ and $G_x=s^{-1}(x)$.
Let $Z\rightarrow P\stackrel{\pi}{\rightarrow} G^{(0)}$ be a smooth fiber bundle
and let $P\times_{G^{(0)}}G=\{(p, \gamma):\pi(p)=t(\gamma)\}$.
Given $x\in G^{(0)}$, we write $Z_x=\pi^{-1}(x)$.
A groupoid can act on a space fibered over its base with a fiber-respecting map, i.e., $\gamma: Z_{t(\gamma)}\rightarrow Z_{s(\gamma)}$.
\begin{definition}
A right action of $G$ on P along the map $\pi$, which is called the moment map, is a mapping $(p, \gamma)\mapsto p\gamma$ from $P\times_{G^{(0)}}G$ to $P$ satisfying:
\renewcommand{\labelenumi}{(\roman{enumi})}
\begin{enumerate}
\item $\pi(p\gamma)=s(\gamma)$,
\item $p(\gamma\gamma_1)=(p\gamma)\gamma_1$,
\item $p1_{\pi(p)}=p$.
\end{enumerate}
\end{definition}
It follows that the map $p\rightarrow p\gamma$ gives a diffeomorphism from $Z_{t(\gamma)}$ to $Z_{s(\gamma)}$ for each $\gamma\in G$. We assume that the submersion $\pi$ is $G$-equivariant, i.e., $\pi(p\gamma)=\pi(p)\gamma$.

Following 
\cite{gorokhovsky2003local},
we assume that $P$ is a proper $G$-manifold and the groupoid $G$ acts freely and cocompactly on $P$.
Then the groupoid $\mathcal{G}=P \rtimes G$ with underlying space $P \times_{G^{(0)}} G$, with space of units $\mathcal{G}^{(0)}=P$ and maps $t(p, \gamma)=p$ and $s(p, \gamma)=p \gamma$, is a proper, free and cocompact groupoid.
The quotient space $M=P/G$ is thus a compact manifold.
We assume $P$ has a $G$-invariant fiberwise Riemannian metric.
Denote the $G$-invariant Riemannian densities on the fibers $\{Z_x\}_{x\in G^{(0)}}$ by 
$\{\mu_x\}_{x\in G^{(0)}}$. 
\begin{definition}
A complex vector bundle $E\stackrel{\wp}{\rightarrow} P$ is called a $G$-equivariant vector bundle over $P$ if $G$ also acts on E from the right with moment $\pi\circ{\wp}$, such that for any $(e, \gamma)\in E\times_{G^{(0)}}G$, $\wp(e\gamma)=\wp(e)\gamma$.
\end{definition}
Given a $G$-equivariant vector bundle $E$, $E\stackrel{\wp}{\rightarrow} P$ is a representation of $\mathcal G$ according to \cite[Definition 1.16]{mestre2016differentiable}, i.e., a $\mathcal G$-equivariant vector bundle over $\mathcal G^{(0)}=P$ by defining a map $(e, (p, \gamma))\mapsto e\gamma$ from $E\times_{\mathcal G^{(0)}}\mathcal G$ to $E$. 
Thus by \cite[Proposition 1.43]{mestre2016differentiable}, there exists a $G$-invariant inner product $\langle\cdot, \cdot\rangle$ on $E$, i.e., a family of inner products on the fibers of $E$ such that 
$$\langle e_1\gamma, e_2\gamma\rangle=\langle e_1, e_2\rangle,$$
whenever $e_1$ and $e_2$ are in the same fiber $E_{\wp(e_1)}$.
\begin{example}
Let $E'$ be the dual bundle of $E$ with $\xi_{e_1}(e_2):=\langle e_1, e_2\rangle$, where $\xi_{e_1}\in E'_p, e_1, e_2\in E_p$ for any $p\in P$. A groupoid action on $E'$ from the right is given by
$$(\xi_{e_1}\gamma)(e_2)
=\xi_{e_1}(e_2\gamma^{-1}).$$
Hence $E'$ is also a $G$-equivariant vector bundle.
\end{example}

\subsection{The Bismut bundle}
We regard $\Gamma_c^{\infty}(P; E)$ as the space of sections of an ``infinite dimensional bundle'' over $G^{(0)}$. We call this ``bundle" a Bismut bundle $E_{\mathfrak{b}}$. Roughly speaking, each section $F\in\Gamma_c^{\infty}(P; E)$ is regarded as a map 
$$x\longmapsto F|_{Z_x}\in\Gamma_c^{\infty}(Z_x; E|_{Z_x})\ \mathrm{ for\ all }\ x\in G^{(0)}.$$
In other words, one defines a section on $E_{\mathfrak{b}}$ to be smooth, if the images of all $x\in G^{(0)}$ fit together to form an element in $\Gamma_c^{\infty}(P; E)$,
and one only needs to consider $F\in\Gamma_c^{\infty}(P; E)$.

Let $ V_P:=\mathrm {Ker}(d\pi)$ denote the vertical tangent bundle. 
We need a grading $E^{\bullet}:=E\otimes\wedge^\bullet V_P'$.

The following definition essentially appears in \cite[Proposition 10.1]{berline2003heat}, which is the starting point of the Bismut superconnection formalism (cf. \cite{berline2003heat}).
\begin{definition}\label{2022.9.20}
On $\Omega(P; E^\bullet)$, we denote the exterior differential operator by
$D_B$, which can be split into three terms 
\[D_B=d_V^{\nabla^E}+L^{E_{\mathfrak{b}}^\bullet}+\iota_\theta,
\]
according to horizontal and vertical degrees.
For each term's explicit formula, one can see \cite[Page 324]{bismut1995flat}.
\end{definition}
Note that the first and third terms of $D_B$ are fiberwise operators, i.e., they commute with functions on the base space. The second term $L^{E_{\mathfrak{b}}^\bullet}$ is a connection.
Following \cite[(3.29)]{bismut1995flat},
the standard metric on $\Gamma_c^{\infty}(E_{\mathfrak{b}})$ is defined by 
\begin{equation}\label{2022.2.28}
\langle u_1, u_2\rangle_{E_{\mathfrak{b}}}\big(x\big):=
\int_{Z_{x}}\langle u_1(p), u_2(p)\rangle_E~\mu_{x}(p).
\end{equation}
An adjoint superconnection $D_B'$ is defined by
\begin{equation}
d_{G^{(0)}}\langle u_1, u_2\rangle_{E_{\mathfrak{b}}}=\langle D_Bu_1, u_2\rangle_{E_{\mathfrak{b}}}-\langle u_1, D_B'u_2\rangle_{E_{\mathfrak{b}}}.
\end{equation}
Explicitly (cf. \cite[Proposition 3.7]{bismut1995flat}),
\[D_B'=(d_V^{\nabla^E})^*+(L^{E_{\mathfrak{b}}^\bullet})'-\theta\wedge.
\]
From \cite[Remark 3.10]{bismut1995flat}, 
we know that $\frac{1}{2}(D_B+D_B')$ is a Bismut superconnection defined in \cite[10.3]{berline2003heat}. Therefore the results in this paper are compatible with \cite{gorokhovsky2003local}.

\section{Convolution algebra}\label{Convolution Algebra}
We briefly recall the definition of Gorokovsky and Lott's algebra-GDA-module structure. In addition, we obtain a star GDA by defining a suitable involution. 
Then, we review the definitions of noncommutative connection and Chern character. 
\begin{definition}
Let $G\rightrightarrows G^{(0)}$ be an \'etale groupoid. $C_c^{\infty}(G)$ is the natural convolution algebra of smooth functions of compact support on G.
And $C_c^{\infty}(G)$ equipped with multiplication and involution:
\[\begin{split}(f_1\star f_2)(\gamma_0):=&
\sum_{\gamma\gamma'=\gamma_0}
f_1(\gamma)f_2(\gamma'),\\
f^*(\gamma):=&\overline {f(\gamma^{-1})}.
\end{split}\]
\end{definition}

Let $G^{(n)}:=\{(\gamma_1, \cdots, \gamma_{n})\in G^n: t(\gamma_{i+1})=s(\gamma_{i}), i=1, \cdots, n-1\}$. It is a manifold of the same dimension as G. Let
$\Omega_c^{m, n}(G)$ be the quotient of $\Omega_c^m(G^{(n+1)})$ by 
the forms which are supported on
$\{(\gamma_0, \cdots, \gamma_n): \gamma_j~\mathrm{is~a~unit~ for ~some}~j>0\}.$
We define $t_n: G^{(n)}\rightarrow G^{(0)}$ and $s_n: G^{(n)}\rightarrow G^{(0)}$ by $t_n(\gamma_1, \cdots, \gamma_{n})=t(\gamma_1)$ and 
$s_n(\gamma_1, \cdots, \gamma_{n})=s(\gamma_n)$, respectively.
They are also local homeomorphisms and induce isomorphisms between tangent spaces.

The following definition is parallel to that of \cite[Definition 3.4]{so2017non}, see also \cite[(6.3)]{gorokhovsky2003local} and \cite[Section 4]{lott1999diffeomorphisms}.
\begin{definition}
The non-commutative deRham differential form is the vector space $\Omega_c^{m, n}(G)$
equipped with multiplication and involution. If $\omega_1\in \Omega_c^{m_1, k}(G)$ and $\omega_2\in \Omega_c^{m_2, l}(G)$, then
$\omega_1\star\omega_2\in \Omega_c^{m_1+m_2, k+l}(G)$ is defined by
\begin{equation}\begin{split}\label{2022.10.3}
(\omega_1\star\omega_2)(\gamma_0; \cdots, \gamma_{k+l}):=
&\sum_{\gamma\gamma'=\gamma_k}\omega_1(\gamma_0; \cdots, \gamma)\wedge\omega_2(\gamma'; \cdots, \gamma_{k+l})-\\
&\sum_{\gamma\gamma'=\gamma_{k-1}}\omega_1(\gamma_0; \cdots, \gamma, \gamma')\wedge\omega_2(\gamma_k; \cdots, \gamma_{k+l})+\cdots+\\
(-1)^k&\sum_{\gamma\gamma'=\gamma_{0}}\omega_1(\gamma; \gamma',\cdots, \gamma_{k-1})\wedge\omega_2(\gamma_k; \cdots, \gamma_{k+l}).\\
\end{split}
\end{equation}
In forming the wedge product, the pullbacks of $s_{k+1}$ and $t_{l+1}$ are used to identify cotangent spaces.
And
\[\begin{split}
\omega^*(\gamma_0; \cdots, \gamma_k):=
(-1)^k\bigg(&\sum_{\gamma\gamma'=\gamma_k^{-1}}\overline\omega(\gamma; \gamma', \cdots, \gamma_1^{-1})\chi_{G^{(0)}}(\gamma_0^{-1})-\cdots+\\
(-1)^{k-1}&\sum_{\gamma\gamma'=\gamma_1^{-1}}\overline\omega(\gamma_k^{-1}; \cdots, \gamma, \gamma')\chi_{G^{(0)}}(\gamma_0^{-1})+\\
(-1)^{k}&\sum_{\gamma\gamma'=\gamma_0^{-1}}
\overline\omega(\gamma_k^{-1}; \cdots, \gamma_1^{-1}, \gamma)\chi_{G^{(0)}}(\gamma')\bigg).\\
\end{split}\]
\end{definition}
\begin{lemma}The involution defined above satisfy
\[\begin{split}
\omega^{**}(\gamma_0; \cdots, \gamma_k)&=\omega(\gamma_0; \cdots, \gamma_k),\\
(\omega_1\star\omega_2)^*(\gamma_0; \cdots, \gamma_{k+l})&=
(\omega_2^*\star\omega_1^*)(\gamma_0; \cdots, \gamma_{k+l}).
\end{split}\]
\begin{proof}The details of the proof are omitted.
\end{proof}
\end{lemma}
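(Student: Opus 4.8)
The plan is to verify directly the two defining axioms of a graded $*$-algebra—involutivity $\omega^{**}=\omega$ and anti-multiplicativity $(\omega_1\star\omega_2)^*=\omega_2^*\star\omega_1^*$—by expanding both sides from the definitions and matching terms after a suitable change of summation variables. The organizing principle throughout is that inversion $\gamma\mapsto\gamma^{-1}$ is a diffeomorphism of $G$ that interchanges the source and target maps (hence also $s_n$ and $t_n$), is compatible with the pullback identifications of cotangent spaces used in forming the wedge products, and satisfies $\chi_{G^{(0)}}(\gamma)=\chi_{G^{(0)}}(\gamma^{-1})$ since $\gamma$ is a unit if and only if $\gamma^{-1}$ is. These compatibilities let me treat the substitution $\gamma\mapsto\gamma^{-1}$, together with the reversal of the ordering of the arguments $(\gamma_0;\dots,\gamma_k)$, as the fundamental bookkeeping move, so that the only genuine content lies in the signs and in how the nested factorization sums recombine.

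For involutivity, I would apply $*$ twice. A single application of $*$ expresses $\omega^*$ as a sum over a split position $j\in\{0,\dots,k\}$ of a factorization sum of conjugated, inverted and reversed copies of $\omega$, each term carrying a factor $\chi_{G^{(0)}}$ that constrains one extreme argument to be a unit. Applying $*$ once more therefore produces iterated sums over two split positions and two factorizations. The key step is to show that the $\chi_{G^{(0)}}$ constraints, combined with the factorization conditions $\gamma\gamma'=\gamma_j^{-1}$, pin the auxiliary split pieces to units and force all but the diagonal contribution to vanish, so that what survives is a single evaluation of $\omega$ at the original arguments $(\gamma_0;\dots,\gamma_k)$. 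It then remains only to check that the two prefactors $(-1)^k$, the two families of internal alternating signs, and the double complex conjugation combine to $+1$; this is the one place where careful sign accounting is unavoidable.

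For anti-multiplicativity, I would expand $(\omega_1\star\omega_2)^*$ (a single convolution split followed by the involution's sum over splits) and $\omega_2^*\star\omega_1^*$ (two involution sums followed by a single convolution split) and exhibit a bijection between their terms. Complex conjugation distributes over the wedge product, $\overline{\alpha\wedge\beta}=\overline\alpha\wedge\overline\beta$, and the substitution $\gamma\mapsto\gamma^{-1}$ with reversal of arguments turns the pattern ``$\omega_1$ to the left of $\omega_2$'' into ``$\omega_2^*$ to the left of $\omega_1^*$,'' which is exactly the asymmetry the claim predicts. I would match the factorization $\gamma\gamma'=\gamma_i$ occurring in $\omega_1\star\omega_2$ with the corresponding split appearing after inversion in $\omega_2^*\star\omega_1^*$, and then reconcile the homological $(-1)$ powers coming from the two different definitions together with any Koszul sign produced by reordering the form degrees $m_1,m_2$.

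The main obstacle in both parts is purely the sign and index bookkeeping rather than any conceptual difficulty: verifying that the $\chi_{G^{(0)}}$ constraints collapse the iterated sums in $\omega^{**}$ to the correct single term, and that the nested factorization sums in the anti-multiplicativity line up under $\gamma\mapsto\gamma^{-1}$. A useful device to control this is to first rewrite $\omega^*$ in a single uniform form indexed by the split position $j\in\{0,\dots,k\}$, recording the sign and the $\chi_{G^{(0)}}$ constraint as explicit functions of $j$; once the definitions are in this normalized shape, both identities reduce to the elementary facts that inversion is an involution of $G$ and that complex conjugation is an involution of $\bC$, which is presumably why the paper omits the details.
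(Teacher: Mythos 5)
The paper offers no proof to compare against (it states only that the details are omitted), so your proposal can be judged only on its own terms; a direct expand-and-match verification is certainly the intended route, and your overall plan is the natural one. However, the key step you describe for $\omega^{**}=\omega$ --- that the $\chi_{G^{(0)}}$ constraints together with the factorization conditions ``pin the auxiliary split pieces to units and force all but the diagonal contribution to vanish'' --- is not what actually happens, and as stated it would fail. Already for $k=1$ a direct computation gives
\[
\omega^{**}(\gamma_0;\gamma_1)=\omega(\gamma_0;\gamma_1)-\chi_{G^{(0)}}(\gamma_1)\sum_{\gamma\gamma'=\gamma_0}\omega(\gamma;\gamma'),
\]
so two further ingredients are needed. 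First, some of the off-diagonal terms cancel only in pairs, via the reindexing $(\gamma,\gamma')\mapsto(\gamma'^{-1},\gamma^{-1})$ matching the factorizations of $\gamma_j^{-1}$ with those of $\gamma_j$ (for $k=1$ this is how the two contributions produced by the split at $\gamma_1^{-1}$ cancel); they are not individually pinned to units by any $\chi_{G^{(0)}}$ factor. Second, the residual term displayed above does not vanish at all as a form on $G^{(2)}$: it is supported on $\{\gamma_1\in G^{(0)}\}$, and it disappears only because $\Omega_c^{m,n}(G)$ is \emph{defined} as the quotient of $\Omega_c^m(G^{(n+1)})$ by forms supported where some $\gamma_j$ with $j>0$ is a unit. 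Your write-up never invokes this quotient, and without it the first identity is simply false; the analogous unit-supported remainders must also be discarded in the anti-multiplicativity computation.

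A smaller but genuine loose end: for the second identity you correctly anticipate a Koszul sign $(-1)^{m_1m_2}$ arising from $\overline{\omega_1\wedge\omega_2}=\overline{\omega_1}\wedge\overline{\omega_2}=(-1)^{m_1m_2}\,\overline{\omega_2}\wedge\overline{\omega_1}$, but the lemma as stated carries no such sign, and the $(-1)^k$ built into the involution involves the groupoid degree $k$ rather than the form degree $m$. Flagging the sign is not the same as reconciling it; you would need to either show how the conventions absorb it or conclude that the identity must be read with the graded convention. That reconciliation, together with the quotient argument above, is precisely the content the lemma asks you to supply.
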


Let $d_1$ be the de Rham differential on $\Omega_c^{*, *}(G)$ and define $d_2: \Omega_c^{*, *}(G)\rightarrow \Omega_c^{*, *+1}(G)$
\begin{equation}
(d_2\omega)(\gamma_0; \gamma_1,\cdots, \gamma_{n}):=
\chi_{G^{(0)}}(\gamma_0)\omega(\gamma_1; \cdots, \gamma_{n}).
\end{equation}
From the total complex of $\Omega_c^{*, *}(G)$, we obtain a star GDA. 
\begin{definition}
Let $\Omega_c^{\bullet}(G):=\bigoplus\limits_{m, n\geq0}\Omega^{m, n}_c(G)$ denote the star GDA. Let
\[[\Omega_c^{\bullet}(G), \Omega_c^{\bullet}(G)]\subseteq \Omega_c^{\bullet}(G)
\]
be the subspace spanned by graded commutators and define
\[\Omega_c^{\bullet}(G)_{\rm{Ab}}:=
\Omega_c^{\bullet}(G)\left/\overline{[\Omega_c^{\bullet}(G), \Omega_c^{\bullet}(G)]}.\right.
\]
\end{definition}
Note that the over-line denotes the closure (in the point-wise convergence topology).
By the derivation property, the differential $d_1+d_2$ perserves $[\Omega_c^{\bullet}(G), \Omega_c^{\bullet}(G)]$. 
Then $d_1+d_2$ descends to $\Omega_c^{\bullet}(G)_{\rm{Ab}}$ with total degree 1.
For complexes $(\Omega_c^{\bullet}(G)_{\rm{Ab}}, d_1+d_2)$, 
we use $H^{\bullet}(\Omega_c^{\bullet}(G)_{\rm{Ab}})$ 
denote its cohomology.

\subsection{Bisection}\label{Bisection}
In the following, we briefly review the theory of bisection based on \cite{exel2008inverse} and \cite{sims2020operator}, not only for the reader's convenience but also because we will present a few improvements.

Let $U$ and $V$ denote open subsets in $G$.
Let $U^{-1}=\{u^{-1}: u\in U\}$ and $UV=\{uv: u\in U, v\in V, (u, v)\in G^{(2)}\}$(possibly empty).
\begin{definition}
An open subset $U\subseteq G$ is called a bisection if the restrictions of s and t to $U$ are injective.
\end{definition}
As pointed out in \cite{exel2008inverse}, 
an \'etale groupoid can be defined by its 
bisections (called slices in that paper).
\begin{proposition}\label{2024.4.8}
We list some basic facts about bisection:
\renewcommand{\labelenumi}{(\roman{enumi})}
\begin{enumerate}
\item \label{2023.3.31} G has a basis of bisections (cf. \cite[Proposition 3.5]{exel2008inverse}, \cite[Lemma 8.4.9]{sims2020operator}),
\item \label{2023.3.22}If $U$ and $V$ are bisections, then $U^{-1}$ and $UV$ are bisections (cf. \cite[Proposition 3.8]{exel2008inverse}),
\item For $f_1, f_2\in C_c^{\infty}(G)$, if $supp(f_1)\subseteq U$ and $supp(f_2)\subseteq V$, then $supp(f_1^*)\subseteq U^{-1}$ and $supp(f_1\star f_2)\subseteq UV$
(cf. \cite[Lemma 9.1.4]{sims2020operator}, \cite[Proposition 3.11]{exel2008inverse}),
\item $C_c^{\infty}(G)=span\{f\in C_c^{\infty}(G)|supp(f)~~~~is~~~~in~~~~a~~~~bisection\}$ (cf. \cite[Proposition 3.10]{exel2008inverse}, \cite[Lemma 9.1.3]{sims2020operator}),
\item Define $f_1$ and $f_2$ as in $\mathrm{(iii)}$, $(f_1\star f_2)(\gamma)=f_1(\alpha)f_2(\beta)$ where $\gamma=\alpha\beta$ has a unique decomposition, which means that convolution is very easy to compute for functions supported on bisections (cf. \cite[Lemma 9.1.4]{sims2020operator}),
\item For $f_1, f_2\in C_c^{\infty}(G)$, if $supp(f_1)=supp(f_2)\subseteq U$, one get $f_1\star f_2^*\in C_c^{\infty}(G^{(0)})$ (cf. \cite[Proposition 3.12]{exel2008inverse}, \cite[Lemma 9.1.4]{sims2020operator}).
\end{enumerate}
\end{proposition}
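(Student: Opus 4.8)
The plan is to reduce all six items to two structural facts coming from the étale topology: both the source and target maps are local homeomorphisms, and the multiplication map is open. The defining hypothesis only supplies that $t$ is a local homeomorphism, so I would first record that $s$ is one as well, using that $s(\gamma)=t(\gamma^{-1})$ together with the fact that inversion $\gamma\mapsto\gamma^{-1}$ is a homeomorphism of $G$. For item~(i), around any $\gamma$ I would choose an open $t$-slice $W_t$ (on which $t$ is injective) and an open $s$-slice $W_s$, and take $W_t\cap W_s$: this is a bisection containing $\gamma$, and intersecting the members of any topological basis with such neighbourhoods produces a basis of bisections.

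For item~(ii), the set $U^{-1}$ is handled by inversion, which is a homeomorphism interchanging the roles of $s$ and $t$, so injectivity of $s|_U$ and $t|_U$ passes to $t|_{U^{-1}}$ and $s|_{U^{-1}}$. The product $UV$ is the heart of the matter. I would first establish the \emph{unique decomposition property}: if $u_1v_1=u_2v_2$ with $u_i\in U$ and $v_i\in V$, then $t(u_1)=t(u_2)$ forces $u_1=u_2$ by injectivity of $t|_U$, whence $v_1=v_2$. Openness of $UV$ I would obtain from the open-map property of multiplication on an étale groupoid, $UV$ being the image of the open set $(U\times V)\cap G^{(2)}$. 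Injectivity of $t|_{UV}$ and $s|_{UV}$ then follows by chasing the decomposition: from $t(u_1v_1)=t(u_1)$ one reads off $u_1=u_2$, then equality of the inner endpoints and injectivity of $t|_V$ gives $v_1=v_2$, and symmetrically for $s$ using injectivity of $s|_V$ and $s|_U$.

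Items~(iii)--(vi) are then formal consequences. For (iii), $f_1^*(\gamma)\neq0$ means $\gamma^{-1}\in\operatorname{supp}(f_1)\subseteq U$, so the nonvanishing set of $f_1^*$ lies in $U^{-1}$, and taking closures (inversion being a homeomorphism) gives $\operatorname{supp}(f_1^*)\subseteq U^{-1}$; a term of $(f_1\star f_2)(\gamma_0)$ is nonzero only if $\gamma_0=\gamma\gamma'$ with $\gamma\in\operatorname{supp}(f_1)\subseteq U$ and $\gamma'\in\operatorname{supp}(f_2)\subseteq V$, so the nonvanishing set lies in $\operatorname{supp}(f_1)\cdot\operatorname{supp}(f_2)$, which is compact (a continuous image of a compact subset of $G^{(2)}$), hence closed, and contained in $UV$, so that taking the closure keeps us inside $UV$. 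Item~(iv) is a partition-of-unity argument: cover the compact support of $f$ by finitely many bisections from the basis of~(i), take a subordinate partition of unity, and split $f$ into pieces each supported in a bisection. Item~(v) is immediate from the unique decomposition property, for $\gamma\in UV$ admits exactly one splitting $\gamma=\alpha\beta$ with $\alpha\in U$ and $\beta\in V$, so the convolution sum collapses to the single term $f_1(\alpha)f_2(\beta)$. Finally for~(vi), taking $V=U$ in (iii) gives $\operatorname{supp}(f_1\star f_2^*)\subseteq UU^{-1}$, and $UU^{-1}\subseteq G^{(0)}$ because any composable $uw^{-1}$ with $u,w\in U$ requires $s(u)=s(w)$, which by injectivity of $s|_U$ forces $u=w$ and hence $uw^{-1}=t(u)\in G^{(0)}$.

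The main obstacle is item~(ii), and specifically the two properties of $UV$ that the remaining items silently invoke: that $UV$ is open and that it enjoys unique decomposition. These are precisely the places where the étale structure (local homeomorphism of $s,t$ and openness of multiplication) is used rather than the bare axioms of a topological groupoid; once they are in hand, items~(iii)--(vi) are bookkeeping.
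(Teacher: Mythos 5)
The paper offers no proof of this proposition at all: each item is simply attributed to the literature (Exel's Propositions 3.5, 3.8, 3.10--3.12 and Sims's Lemmas 8.4.9, 9.1.3, 9.1.4), so there is nothing internal to compare against. Your argument is correct and is essentially a reconstruction of the proofs in those references. The reduction of everything to (a) $s$ being a local homeomorphism (via inversion), (b) the unique-decomposition property of $UV$, and (c) openness of the multiplication map is exactly the standard route, and your derivations of (iii)--(vi) from these are sound; in particular the compactness argument for $\operatorname{supp}(f_1)\cdot\operatorname{supp}(f_2)$ correctly uses the standing Hausdorff assumption, and the collapse $UU^{-1}\subseteq G^{(0)}$ via injectivity of $s|_U$ is the right mechanism for (vi). The one ingredient you import without proof, openness of $m\colon G^{(2)}\to G$, deserves a word of caution: the most common proof of that fact proceeds by showing products of bisections are open, so quoting it here risks circularity. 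It can, however, be established independently (e.g.\ $m=\mathrm{pr}_2\circ\Phi$ where $\Phi(\alpha,\beta)=(\alpha,\alpha\beta)$ is a homeomorphism onto the fibre product $G\times_{t,t}G$, and the projection from a fibre product of a local homeomorphism with itself is again a local homeomorphism), so this is a presentational point rather than a gap. Two negligible omissions: in (vi) you should also note that $f_1\star f_2^*$ restricted to the open set $G^{(0)}$ is smooth and compactly supported (immediate from (v)), and in (v) that the sum vanishes identically off $UV$.
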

We need a case of (vi) of Proposition \ref {2024.4.8}. This implies that one of the functions in (vi) can be chosen as a characteristic function of a bisection, although the characteristic function doesn't belong to $C_c^\infty(G)$.
\begin{corollary}\label{2024.03.31}
For $f\in C_c^{\infty}(G)$ with $supp(f)\subseteq U$, a function $1_U\in C_c^{\infty}(G)$ with $supp(1_U)\subseteq U^{-1}$ exists and equals $1$ on $t|_{U^{-1}}^{-1}s|_U(supp(f))=(supp(f))^{-1}$ such that $f\star1_U\in C_c^{\infty}(G^{(0)})$.
\end{corollary}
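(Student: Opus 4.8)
The plan is to split the statement into two essentially independent assertions: the existence of the smooth function $1_U$ with the prescribed support and normalization, which is a bump-function construction on the manifold $G$, and the convolution property $f\star 1_U\in C_c^\infty(G^{(0)})$, which I will show is forced by the inclusion $UU^{-1}\subseteq G^{(0)}$ valid for every bisection.

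First I would verify the set equality $t|_{U^{-1}}^{-1}s|_U(supp(f))=(supp(f))^{-1}$. By Proposition \ref{2024.4.8}(ii), $U^{-1}$ is again a bisection, so $t|_{U^{-1}}$ is injective and admits an inverse on its image. For $u\in U$ we have $u^{-1}\in U^{-1}$ and $t(u^{-1})=s(u)$, so $u^{-1}$ is the unique element of $U^{-1}$ with target $s(u)$; ranging over $u\in supp(f)$ gives the asserted equality. In particular $K:=(supp(f))^{-1}$ is the image of the compact set $supp(f)$ under the inversion diffeomorphism, hence compact, and it sits inside the open set $U^{-1}$.

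Next I would produce $1_U$. Since $G$ is a smooth Hausdorff manifold and $K$ is compact in the open set $U^{-1}$, a smooth bump function furnishes $1_U\in C_c^\infty(G)$ with $0\le 1_U\le 1$, $supp(1_U)\subseteq U^{-1}$, and $1_U\equiv 1$ on $K$. This directly gives the first two requirements of the corollary. To obtain $f\star 1_U\in C_c^\infty(G^{(0)})$, I would apply Proposition \ref{2024.4.8}(iii) to get $supp(f\star 1_U)\subseteq UU^{-1}$ and then check $UU^{-1}\subseteq G^{(0)}$: if $uv^{-1}$ is composable with $u,v\in U$, then $s(u)=t(v^{-1})=s(v)$, and injectivity of $s$ on the bisection $U$ forces $u=v$, whence $uv^{-1}=1_{t(u)}\in G^{(0)}$. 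Since $C_c^\infty(G)$ is closed under $\star$, the product $f\star 1_U$ is smooth and compactly supported, and being supported in $G^{(0)}$ it lies in $C_c^\infty(G^{(0)})$. As a bonus, Proposition \ref{2024.4.8}(v) lets me compute $(f\star 1_U)(t(u))=f(u)\,1_U(u^{-1})=f(u)$ for $u\in supp(f)$, exhibiting $f\star 1_U$ as the pushforward of $f$ along $t$ and explaining why the normalization $1_U\equiv 1$ on $K$ is the natural choice.

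The argument is largely routine, and the one point requiring care is that the characteristic function $\chi_{U^{-1}}$ suggested by Proposition \ref{2024.4.8}(vi) is not smooth, so it must be replaced by a smooth $1_U$ agreeing with it only on the compact set $K$. I expect the main (minor) obstacle to be confirming that this replacement does not disturb the support computation; it does not, precisely because the inclusion $supp(f\star 1_U)\subseteq UU^{-1}\subseteq G^{(0)}$ depends only on the supports of $f$ and $1_U$ and not on the values of $1_U$ inside $U^{-1}$.
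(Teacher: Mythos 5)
Your proposal is correct and follows essentially the same route as the paper: both construct $1_U$ as a smooth bump function equal to $1$ on the compact set $(supp(f))^{-1}\subseteq U^{-1}$, and both use the bisection property to force any composable pair $(\alpha,\beta)\in U\times U^{-1}$ to satisfy $\beta=\alpha^{-1}$, so that $f\star 1_U$ is supported in $G^{(0)}$. The only cosmetic difference is that you package this as the set inclusion $UU^{-1}\subseteq G^{(0)}$ while the paper argues pointwise on where $(f\star 1_U)(\gamma)\neq 0$ and records the explicit formula $(f\star 1_U)(\gamma)=f(t|_U^{-1}t(\gamma))$, which your closing computation reproduces.
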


\begin{proof}
The existence of such a function is obvious as if $supp(1_U)$ is big enough such that 
$(supp(f))^{-1}\\
\subseteq supp(1_U)\subseteq U^{-1}$.
Observing that if $(f\star 1_U)(\gamma)\neq 0$, then there exists at least one pair $(\alpha, \beta)\in G^{(2)}$, such that $\gamma=\alpha\beta, \alpha\in U$ and $\beta\in U^{-1}$, but since $t(\alpha)=t(\gamma)$ and $t(\beta)=s(\alpha)$, we necessarily have that $\alpha=t|_U^{-1}t(\gamma)$ and $\beta=t|_{U^{-1}}^{-1}s|_U(\alpha)=\alpha^{-1}$, where we are denoting by $t|_U$ the restriction of $t$ to $U$. Hence $\gamma\in G^{(0)}$. Since $supp(f)$ is compact and $s|_U, t|_{U^{-1}}^{-1}$ are homeomorphisms, $t|_{U^{-1}}^{-1}s|_U(supp(f))$ is a compact subset of $U^{-1}$. Thus $supp(f\star 1_U)=supp(f)(supp(f))^{-1}$ is compact and $(f\star 1_U)(\gamma)=f(t|_U^{-1}t(\gamma))$
is smooth.
\end{proof}

Recall that $s_1, t_1: G^{(2)}\rightarrow G^{(0)}$ are defined by $s_1(\gamma_0, \gamma_1)=s(\gamma_1)$ and $t_1(\gamma_0, \gamma_1)=t(\gamma_0)$.
Set $U\hat\times V:=(U\times V)\cap G^{(2)}=\{(u, v): u\in U, v\in V, (u, v)\in G^{(2)}\}$(possibly empty).
\begin{definition}
An open subset $U\subseteq G^{(2)}$ is called a bisection in $G^{(2)}$ if the restrictions of $s_1
$ and $t_1$ to $U$ are injective.
\end{definition}

\begin{lemma}If $U, V\subseteq G$ are bisections, 
then $U\hat\times V\subseteq G^{(2)}$ is a bisection in $G^{(2)}$.
\end{lemma}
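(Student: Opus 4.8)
The plan is to verify directly the three requirements packaged in the definition of a bisection in $G^{(2)}$: that $U\hat\times V$ is open, and that $s_1$ and $t_1$ each restrict to an injective map on it. Openness is immediate, so I would dispose of it first: since $U$ and $V$ are open in $G$, the product $U\times V$ is open in $G\times G$, and $G^{(2)}$ carries the subspace topology, whence $U\hat\times V=(U\times V)\cap G^{(2)}$ is open in $G^{(2)}$ by definition. The content of the lemma therefore lies entirely in the two injectivity claims.

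For $t_1$, I would take two pairs $(u_1,v_1),(u_2,v_2)\in U\hat\times V$ with $t_1(u_1,v_1)=t_1(u_2,v_2)$, that is $t(u_1)=t(u_2)$. Because $U$ is a bisection, $t|_U$ is injective, forcing $u_1=u_2$. This alone does not yet give $v_1=v_2$; here one invokes composability. Since both pairs lie in $G^{(2)}$ we have $s(u_i)=t(v_i)$, so $t(v_1)=s(u_1)=s(u_2)=t(v_2)$, and injectivity of $t|_V$ now yields $v_1=v_2$. The argument for $s_1$ is exactly dual: from $s(v_1)=s(v_2)$ and injectivity of $s|_V$ one gets $v_1=v_2$, then $s(u_1)=t(v_1)=t(v_2)=s(u_2)$, and injectivity of $s|_U$ gives $u_1=u_2$.

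The only real point to watch — and the single place where working inside $G^{(2)}$ rather than in the full product $U\times V$ is essential — is that matching a single coordinate (the $t$ of the first factor, or the $s$ of the second) does not by itself pin down the whole pair; the composability relation $s(u_i)=t(v_i)$ must be used to transport the equality from one factor to the other, after which the bisection property of each individual factor closes the argument. Thus I expect the proof to be short, with the ``main obstacle'' being nothing more than remembering to bring in composability at the correct moment rather than any genuine difficulty.
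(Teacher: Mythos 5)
Your proof is correct and follows essentially the same route as the paper: both establish injectivity of $t_1$ and $s_1$ by using the bisection property of one factor to pin down that coordinate and then transporting the equality to the other coordinate via the composability condition $s(u)=t(v)$ (the paper phrases this by parametrizing points of $U\hat\times V$ as $(\gamma,(t|_V)^{-1}(s(\gamma)))$, which is the same idea). The only cosmetic difference is the openness step, where you invoke the subspace topology directly while the paper argues via the multiplication map being a homeomorphism onto $UV$; both are fine.
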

\begin{proof}
Since the restriction of multiplication map on $U\hat\times V\rightarrow UV$ is a continuous, open map and one-to-one, it is a homeomorphism. So $U\hat\times V$ is an open subset of $G^{(2)}$.
If $U\hat\times V$ is not empty, there exists $(\gamma_u, \gamma_v)\in U\hat\times V$ such that $t(\gamma_v)=s(\gamma_u)$. Since $V$ is a bisection, thus $\gamma_v=(t|_V)^{-1}(s(\gamma_u))$ and therefore $(\gamma_u, \gamma_v)=(\gamma_u, (t|_V)^{-1}(s(\gamma_u)))$, where we are denoting by $t|_V$ the restriction of $t$ to $V$. 

For any $(\gamma_0, (t|_V)^{-1}(s(\gamma_0)))\in U\hat\times V$ 
and $(\gamma_1, (t|_V)^{-1}(s(\gamma_1)))\in U\hat\times V$, 
suppose
$$t_1(\gamma_0, (t|_V)^{-1}(s(\gamma_0)))=t_1(\gamma_1, (t|_V)^{-1}(s(\gamma_1))),$$ 
which implies $t(\gamma_0)=t(\gamma_1)$.
Since $t|_U$ is a homeomorphism, we get $\gamma_0=\gamma_1$, it follows that $(\gamma_0, (t|_V)^{-1}(s(\gamma_0)))=(\gamma_1, (t|_V)^{-1}(s(\gamma_1)))$ and therefore $t_1|_{U\hat\times V}$ is injective. Similarly, $s_1|_{U\hat\times V}$ is injective.
\end{proof}
\begin{lemma}\label{23.3.22}
The manifold $G^{(2)}$ has a base of bisections. 
\end{lemma}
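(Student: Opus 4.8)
The plan is to realize the desired base as the collection of sets $U\hat\times V$ with $U,V$ bisections of $G$, and to deduce the base property directly from the subspace topology of $G^{(2)}$ together with the immediately preceding lemma. Recall that $G^{(2)}$ carries the topology inherited from $G\times G$, so every open subset of $G^{(2)}$ is of the form $\widetilde W\cap G^{(2)}$ for some open $\widetilde W\subseteq G\times G$.

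First I would observe that, since $G$ admits a basis of bisections by (i) of Proposition \ref{2024.4.8}, the rectangles $U\times V$ with $U$ and $V$ bisections of $G$ form a basis for the product topology on $G\times G$: any basic open box can be shrunk in each factor to a bisection using the basis of $G$. Intersecting with $G^{(2)}$ then shows that the sets $U\hat\times V=(U\times V)\cap G^{(2)}$ form a basis for the subspace topology on $G^{(2)}$.

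Next I would invoke the preceding lemma, which guarantees that each such $U\hat\times V$ is in fact a bisection in $G^{(2)}$, i.e.\ the restrictions of $s_1$ and $t_1$ to it are injective. Combining the two observations yields the claim: given $(\gamma_0,\gamma_1)\in G^{(2)}$ and an open neighborhood $W$ of it in $G^{(2)}$, write $W=\widetilde W\cap G^{(2)}$, choose bisections $U\ni\gamma_0$ and $V\ni\gamma_1$ of $G$ with $U\times V\subseteq\widetilde W$, and conclude that $U\hat\times V$ is a bisection in $G^{(2)}$ satisfying $(\gamma_0,\gamma_1)\in U\hat\times V\subseteq W$.

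I do not expect a serious obstacle: the argument simply transports the basis of bisections of $G$ through the product and then through the subspace topology, the only genuine input being the preceding lemma that $U\hat\times V$ is a bisection in $G^{(2)}$. The one point deserving care is to confirm that $(\gamma_0,\gamma_1)$ indeed lies in the chosen box $U\times V$, which is immediate once $U$ and $V$ are taken to contain $\gamma_0$ and $\gamma_1$.
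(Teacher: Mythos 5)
Your proof is correct and follows essentially the same route as the paper: both combine Proposition \ref{2024.4.8}(i) with the preceding lemma that $U\hat\times V$ is a bisection in $G^{(2)}$. Your version is in fact slightly more complete, since the paper's one-line argument only exhibits a cover of $G^{(2)}$ by such bisections, whereas you also verify the refinement property in the subspace topology, which is what being a base actually requires.
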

\begin{proof}
If $(\gamma_0, \gamma_1)\in G^{(2)}$, by (i) of Proposition \ref{2024.4.8}, there exist $U, V\subseteq G$ that are bisections such that $\gamma_0\in U, \gamma_1\in V$. Then $(\gamma_0, \gamma_1)$ is contained in the bisection $U\hat\times V$.
\end{proof}
Next, we extend (iv) of Proposition \ref{2024.4.8} to the case of noncommutative forms.
\begin{proposition}\label{2024.04.08}
For $\omega\in\Omega_c^{*, 1}(G)=\Omega_c^*(G^{(2)})/\sim$, we have $\omega=\sum\limits_{i}\omega_{i}$, where $supp(\omega_{i})\subseteq U_i\hat\times V_i$.
\end{proposition}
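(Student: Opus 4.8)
The plan is to reduce the statement for noncommutative forms on $G^{(2)}$ to the already-established facts about bisections of the ordinary manifold $G^{(2)}$, namely Lemma \ref{23.3.22}, together with a standard partition-of-unity argument. First I would recall that by definition $\Omega_c^{*,1}(G)$ is the quotient of $\Omega_c^*(G^{(2)})$ by the subspace of forms supported where $\gamma_1$ is a unit, so it suffices to decompose a representative $\tilde\omega\in\Omega_c^*(G^{(2)})$ and then pass to the quotient class. Thus the real task is a decomposition statement on the honest manifold $G^{(2)}$.

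\smallskip

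Second, I would invoke Lemma \ref{23.3.22}: the manifold $G^{(2)}$ has a base of bisections of the form $U\hat\times V$ with $U,V\subseteq G$ bisections. Given any $\tilde\omega\in\Omega_c^*(G^{(2)})$, its support $K:=\operatorname{supp}(\tilde\omega)$ is compact, so I would cover $K$ by finitely many basic bisections $\{U_i\hat\times V_i\}_{i=1}^N$. Choosing a smooth partition of unity $\{\rho_i\}_{i=1}^N$ subordinate to this finite cover (with $\sum_i\rho_i\equiv 1$ on $K$), I set $\tilde\omega_i:=\rho_i\,\tilde\omega$. Then $\tilde\omega=\sum_i\tilde\omega_i$ with $\operatorname{supp}(\tilde\omega_i)\subseteq\operatorname{supp}(\rho_i)\cap K\subseteq U_i\hat\times V_i$, and each $\tilde\omega_i$ is again a compactly supported smooth form. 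Passing to the quotient classes $\omega_i:=[\tilde\omega_i]\in\Omega_c^{*,1}(G)$ gives $\omega=\sum_i\omega_i$ with the required support condition, which completes the argument.

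\smallskip

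The only genuine subtlety I anticipate is verifying that the partition of unity can be taken subordinate to bisection-type opens while remaining smooth, i.e. that $G^{(2)}$ admits smooth partitions of unity subordinate to an arbitrary open cover. This is guaranteed because $G^{(2)}$ is a smooth manifold (of the same dimension as $G$, as recorded just after the definition of $G^{(n)}$) and is paracompact; the compactness of $\operatorname{supp}(\tilde\omega)$ reduces the cover to a finite one, so no issue of local finiteness over the whole manifold arises. I would also note in passing that this proposition is precisely the degree-one, step-one analogue of (iv) of Proposition \ref{2024.4.8}, and the same finite-cover-plus-partition-of-unity scheme extends verbatim to $\Omega_c^{*,n}(G)$ using bisections $U_0\hat\times\cdots\hat\times U_n$ of $G^{(n+1)}$, which is the form in which it will ultimately be used in the later computations.
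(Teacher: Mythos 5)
Your argument is correct and is essentially identical to the paper's own proof: both invoke Lemma \ref{23.3.22} to cover the compact support by finitely many bisections $U_i\hat\times V_i$ and then split $\omega$ via a subordinate partition of unity. Your extra care in working with a representative in $\Omega_c^*(G^{(2)})$ before passing to the quotient is a minor refinement the paper leaves implicit.
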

\begin{proof}
Fix $\omega\in\Omega_c^{*, 1}(G)$. By Lemma \ref{23.3.22}, we can cover $supp(\omega)$ with bisections $\{U_i\hat\times V_i: U_i, V_i\subseteq G~~are~~bisections\}$, and then use compactness to pass to a finite subcover $U_1\hat\times V_1, \cdots , U_n\hat\times V_n$. Choose a partition of unity $\{h_i\}$ on $supp(\omega)$ subordinate to the $U_i\hat\times V_i$. The pointwise products $\omega_i=\omega\cdot h_i$ belong to $\Omega_c^{*, 1}(G)$ with $supp(\omega_i)\subseteq U_i\hat\times V_i$, and we have $\omega=\sum\limits_i^n\omega_i$. \end{proof}
\begin{remark}
For convenience of later calculation, we can also write $\omega=\sum\limits_{i, j}\omega_{i, j}$, where $supp(\omega_{i, j})\subseteq U_i\hat\times V_j$. 
\end{remark}

\subsection{Vector representation}\label{Vector representation}
Let $E\rightarrow G^{(0)}$ be a $G$-equivariant vector bundle.
Thus, we can define a $C_c^\infty(G)$-module structure on $\Gamma_c^{\infty}(G^{(0)}; E)$.
\begin{definition}\label{2024.6.7}
The vector representation $\nu$ is the left action of $C_c^\infty(G)$ on $\Gamma_c^{\infty}(G^{(0)}; E)$ defined by
\begin{equation}\label{2024.5.18}
(\nu(f)F)(x)
=\sum_{\gamma\in G^x }f(\gamma)F(s(\gamma))\gamma^{-1}.
\end{equation}
\end{definition}
In Equation \eqref{2024.5.18}, we have used the fact that $\gamma^{-1}: E_{s(\gamma)}\rightarrow E_{t(\gamma)}$.
When $E$ is a trivial bundle, one can find the definition of vector representation in \cite[(6.6)]{gorokhovsky2003local} and \cite[(9)]{10.1007/978-3-0348-8364-1_8}.

We extend the vector representation to a left action of
$\Omega_c^\bullet(G)$.
From $s_n: G^{(n)}\rightarrow G^{(0)}$,
we have the pullback vector bundle over $G^{(n)}$ denoted by $s_n^*E$.
Just like in \cite[(6.12)]{gorokhovsky2003local}, there is an isomorphism 
between $\Omega_c^{m, n}(G)\otimes_{C_c^{\infty}(G)}\Gamma_c^{\infty}(G^{(0)}; E)$ and the quotient of $\Omega_c^m(G^{(n)}; s_n^*E)$ by the forms which are supported on
\[
\{(\gamma_0, \cdots, \gamma_{n-1}): \gamma_j~\mathrm{is~a~unit~for~some}~j\geq0\}.\]
It will be convenient to denote the quotient space by $\Omega_c^{m, n}(G; E)$.

In the following definition, we extend the vector representation in Definition \ref{2024.6.7} to a left action of
$\Omega_c^\bullet(G)$ on $\Omega_c^\bullet(G; E)$.
\begin{definition}\label{2024.05.18}
Let $\omega\in\Omega_c^{*, k}(G)$ and $
F\in\Omega_c^{*, l}(G; E)$. Then we define $\nu(\omega)F\in\Omega_c^{*, k+l}(G; E)$ by
\begin{equation}
\begin{split}
(\nu(\omega)F)(&\gamma_0, \cdots, \gamma_{k+l-1})\\
:=
&\mathlarger{\sum}_{i=0}^{k-1}(-1)^i\sum_{\gamma\gamma'=\gamma_i}
\omega(\gamma_0; \cdots, \gamma, \gamma', \cdots, \gamma_{k-1})\wedge
F(\gamma_k, \cdots, \gamma_{k+l-1})\\
&+(-1)^k\sum_{\gamma\gamma'=\gamma_k}
\omega(\gamma_0; \cdots, \gamma_{k-1}, \gamma)\wedge
F(\gamma',\cdots, \gamma_{k+l-1})\\
&+\mathlarger{\sum}_{j=k+1}^{k+l-1}(-1)^j\sum_{\gamma\gamma'=\gamma_j}
\omega(\gamma_0; \cdots, \gamma_{k-1}, \gamma_k)\wedge
F(\gamma_{k+1}, \cdots, \gamma, \gamma',\cdots, \gamma_{k+l-1})\\
&+(-1)^{k+l}\sum_{\gamma\gamma'=\gamma_{k+l}}
\omega(\gamma_0; \cdots, \gamma_{k-1}, \gamma_k)\wedge
F(\gamma_{k+1}, \cdots, \gamma_{k+l-1}, \gamma)\gamma^{-1}.\\
\end{split}
\end{equation}
\end{definition}

The above equation is a slight generalization of (6.6), (6.8), (6.11), and (6.12) of \cite{gorokhovsky2003local}.
The map which sends $\omega\otimes F$ to $\nu(\omega)F$ gives an isomorphism between $\Omega_c^{m', n'}(G)
\otimes_{C_c^{\infty}(G)}\Omega_c^{m, n}(G; E)$ and $\Omega_c^{m'+m, n'+n}(G; E)$.

\subsection{Noncommutative connection}
We recall that Gorokhovsky and Lott \cite[(6.14)]{gorokhovsky2003local} defined a noncommutative connection. 
\begin{definition}\label{2024.6.15}
A $\bC$-linear map $\nabla: \Gamma_c^\infty(G^{(0)}; E)\rightarrow \Omega_c^1(G)\otimes_{C_c^\infty(G)}\Gamma_c^\infty(G^{(0)}; E)$ is a connection if 
$$\nabla(\nu(f)F)=\nu(f)\nabla(F)+\nu((d_1+d_2)f)F,~\mathrm{for~any~} f\in C_c^\infty(G).$$
\end{definition}
Following \cite[Sections 4 and 5]{exel2008inverse}, all bisections of $G$ (denoted by $S(G)$) form an inverse semigroup. One can construct a groupoid of germs from an inverse semigroup action. For a bisection $U$ and $x\in t(U)$, we let
$$\theta_{U}(x)=x\cdot t|_U^{-1}(x)=s\circ t|_U^{-1}(x).$$
One gets $\theta_{U}: t(U)\rightarrow s(U)$ is a homeomorphism
and the correspondence $U\mapsto\theta_{U}$ gives an action of $S(G)$ on $G^{(0)}$ (cf. \cite[Proposition 5.3]{exel2008inverse}).
For $F\in\Gamma_c^{\infty}(G^{(0)}; E)$, each $\gamma\in G$ induces an action $[\theta_{U}]^*: \Gamma_c^{\infty}(G^{(0)}; E)\rightarrow \Gamma_c^{\infty}(t(U); E)$, where $U$ is any bisection contains $\gamma$ and  $[\theta_{U}]$ is the germ of $\theta_{U}$, explicitly, 
$$([\theta_{U}]^*F)(x)=F(x\cdot t|_U^{-1}(x))\cdot (t|_U^{-1}(x))^{-1}.$$
Using this notation, we define a $G$-invariant connection as a generalization of the
 crossed product case (cf. \cite{gorokhovsky2003local, so2017regularity, so2017non}).
\begin{definition}\label{2024.7.2}
A (commutative) connection $\nabla$ on $E$ is called $G$-invariant if 
$$\nabla([\theta_{U}]^*F)=[\theta_{U}]^*(\nabla F)$$
for any $U$ and $F$ as above.
\end{definition}  
We fix $h \in C_c^{\infty}({G}^{(0)})$ such that for all $x \in {G}^{(0)}$
$$
\sum_{\gamma \in {G}^x} h(s(\gamma))=1.
$$
Then there is a connection (cf. \cite[(6.14)]{gorokhovsky2003local})
$$
\nabla^{\text{can}}: \Gamma_c^\infty(G^{(0)}; E)\rightarrow \Omega_c^1(G)\otimes_{C_c^\infty(G)}\Gamma_c^\infty(G^{(0)}; E)
$$
of the form $\nabla^{\text{can}}=\nabla^{1,0} \oplus \nabla^{0,1}$,
where for any $F \in \Gamma_c^\infty(G^{(0)}; E), \nabla^{1,0}(F) \in \Omega_c^1({G}^{(0)}; E)$ is a $G$-invariant covariant derivative of $F$ and $\nabla^{0,1}(F) \in \Omega_c^{0,1}(G) \otimes_{C_c^{\infty}({G})} \Gamma_c^\infty(G^{(0)}; E)$ is given by
$$
(\nabla^{0,1}F)(\gamma_0)=(F(t(\gamma_0)) \gamma_0)h(s(\gamma_0)), \quad \forall \gamma_0 \notin {G}^{(0)}.
$$
Observe that 
$$(\nabla^{\text{can}})^2\in Hom_{C_c^\infty(G)}(\Gamma_c^\infty(G^{(0)}; E), \Omega_c^2(G)\otimes_{C_c^\infty(G)}\Gamma_c^\infty(G^{(0)}; E))
$$
is given by the action of a 2-form $\Theta\in\Gamma_c^\infty(G^{(0)}; \Omega_c^2(G)\otimes_{C_c^\infty(G)}End(E))$ which commutes with $C_c^\infty(G)$.

\subsection{Chern character}\label{Chern class}
Following \cite[Subsection 2.2]{lott1999diffeomorphisms} and
\cite[(6.18)]{gorokhovsky2003local}, 
we define:
\begin{definition}
The Chern character of the connection $\nabla^{\text{can}}$ on the $G$-equivariant vector bundle $E$ is defined by
\[\operatorname{Ch}(\nabla^{\text{can}})=tr(e^{-\frac{\Theta}{2\pi i}})\in \Omega_c^{\bullet}(G)_{\rm{Ab}},
\] 
where $tr$ is the pointwise trace.
\end{definition}

\section{Noncommutative connection and trace on a Bismut bundle}\label{Noncommutative connection and trace on a Bismut bundle}
In this section, we first generalize the vector representation of Definition \ref{2024.6.7} to the Bismut bundle and explain some parallel results as in Subsection \ref{Vector representation}. Then we introduce $\Omega_c^{\bullet}(G)$-linear operator, specifically providing some necessary conditions for $\Omega_c^{\bullet}(G)$-linear smoothing operator. Next, we recall how Gorokhovsky and Lott generalized the trace and the Bismut superconnection to the convolution algebra of an \'etale groupoid. Finally, we point out the main results.
\begin{definition}
The vector representation $\nu$ in the Bismut bundle case is the left action of $C_c^\infty(G)$ on $\Gamma_c^{\infty}(P; E)$ defined by
\begin{equation}\label{vr}
(\nu(f)F)(p)
=\sum_{t(\gamma)=\pi(p)}f(\gamma)F(p\gamma)\gamma^{-1}.
\end{equation}
\end{definition}
In Equation \eqref{vr}, we have used the fact that $\gamma^{-1}: E_{p\gamma}\rightarrow E_p$.

Recall \cite[(6.22)]{gorokhovsky2003local},
the authors defined $G^{(n)}\times_s P:=\{(\gamma_1, \cdots, \gamma_{n}, p)\in G^{(n)}\times P| \pi(p)=s(\gamma_{n})\}$ in order to write a fiberwise smoothing operator with differential form coefficients.
Because we use the right action instead of the left action,
we consider
$$P\times_tG^{(n)}:=
\{(p, \gamma_0, \cdots, \gamma_{n-1})\in P\times G^{(n)}|\pi(p)=t(\gamma_0)\}$$ and define $$s_n: P\times_tG^{(n)}\rightarrow P ~\text{by}~ s_n(p, \gamma_1, \cdots, \gamma_n)=p\gamma_1\cdots\gamma_n.$$ Thus, we have the pullback vector bundle over $P\times_tG^{(n)}$ denoted by $s_n^*E$.

If we define $\Omega_c^{m, n}(P\times_tG; E)$ be the quotient of $\Omega_c^m(P\times_tG^{(n)}; s_n^*E)$
by the sections which are supported on
$\{(p, \gamma_0, \cdots, \gamma_{n-1}): \gamma_j~\mathrm{is~a~unit~for~some}~j\geq0\}$,
it follows from the discussion preceding \cite[(6.23)]{gorokhovsky2003local} that 
\[
\Omega_c^{m, n}(G)\otimes_{C_c^{\infty}(G)}\Gamma_c^{\infty}(P; E)\cong\Omega_c^{m, n}(P\times_tG; E).\]

We also have an isomorphism between $\Omega_c^{m', n'}(G)\otimes_{C_c^{\infty}(G)}\Omega_c^{m, n}(P\times_tG; E)$ and $\Omega_c^{m'+m, n'+n}(P\times_tG; E)$
by extending
the vector representation to a left action of 
$\Omega_c^\bullet(G)$ on
$\Omega_c^\bullet(P\times_t G; E)$ similar to Definition \ref{2024.05.18}.
\begin{definition}\label{2024.2.2}
Let $\omega\in\Omega_c^{*, k}(G)$ and $
F\in\Omega_c^{*, l}(P\times_t G; E)$. Then we define $\nu(\omega)F\in\Omega_c^{*, k+l}(P\times_t G; E)$ by
\begin{equation}
\begin{split}
(\nu(\omega)F)(p, &\gamma_0, \cdots, \gamma_{k+l-1})\\
:=
&\mathlarger{\sum}_{i=0}^{k-1}(-1)^i\sum_{\gamma\gamma'=\gamma_i}
\omega(\gamma_0; \cdots, \gamma, \gamma', \cdots, \gamma_{k-1})\wedge
F(p\gamma_0\cdots\gamma_{k-1}, \gamma_k, \cdots, \gamma_{k+l-1})\\
&+(-1)^k\sum_{\gamma\gamma'=\gamma_k}
\omega(\gamma_0; \cdots, \gamma_{k-1}, \gamma)\wedge
F(p\gamma_0\cdots\gamma, \gamma',\cdots, \gamma_{k+l-1})\\
&+\mathlarger{\sum}_{j=k+1}^{k+l-1}(-1)^j\sum_{\gamma\gamma'=\gamma_j}
\omega(\gamma_0; \cdots, \gamma_{k-1}, \gamma_k)\wedge
F(p\gamma_0\cdots\gamma_k, \cdots, \gamma, \gamma',\cdots, \gamma_{k+l-1})\\
&+(-1)^{k+l}\sum_{\gamma\gamma'=\gamma_{k+l}}
\omega(\gamma_0; \cdots, \gamma_{k-1}, \gamma_k)\wedge
F(p\gamma_0\cdots\gamma_k, \cdots, \gamma_{k+l-1}, \gamma)\gamma^{-1}.\\
\end{split}
\end{equation}
\end{definition}
A $C_c^{\infty}(G)$-valued inner product on $\Gamma_c^{\infty}(E_{\mathfrak{b}})\cong\Gamma_c^{\infty}(P; E)$ induced from \eqref{2022.2.28} is given by the formula
\[\langle u_1, u_2\rangle_{E_{\mathfrak{b}}\times_t G}(\gamma):=
\int_{Z_{t(\gamma)}}\langle u_1(p), u_2(p\gamma)\gamma^{-1}\rangle_E~\mu_{t(\gamma)}(p).\]
There is a pre-Hilbert $C_c^{\infty}(G)$-module structure on $\Gamma_c^{\infty}(P; E)$ (a parallel result of \cite[Lemma 3,23]{so2017non}
).
\begin{lemma}\label{2024.3.31}
For any $f\in C_c^{\infty}(G), u_1, u_2\in \Gamma_c^{\infty}(P; E)$, we have
\[\begin{split}(\langle u_1, u_2\rangle_{E_{\mathfrak{b}}\times_t G})^*=&\langle u_2, u_1\rangle_{E_{\mathfrak{b}}\times _tG},\\
f\star\langle u_1, u_2\rangle_{E_{\mathfrak{b}}\times_t G}=&
\langle\nu(f)u_1, u_2\rangle_{E_{\mathfrak{b}}\times_t G}.
\end{split}\]
\end{lemma}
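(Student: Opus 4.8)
The plan is to verify both identities by direct computation, unwinding the definition
\[
\langle u_1,u_2\rangle_{E_{\mathfrak{b}}\times_t G}(\gamma)=\int_{Z_{t(\gamma)}}\langle u_1(p),u_2(p\gamma)\gamma^{-1}\rangle_E\,\mu_{t(\gamma)}(p)
\]
and reducing everything to two tools established earlier: the $G$-invariance of the fibrewise inner product, $\langle e_1\gamma,e_2\gamma\rangle=\langle e_1,e_2\rangle$, and the $G$-invariance of the fibrewise densities $\mu_x$, which allows the change of variable along the diffeomorphism $p\mapsto p\gamma\colon Z_{t(\gamma)}\to Z_{s(\gamma)}$. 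Throughout, the bookkeeping is to check that the two entries inside $\langle\cdot,\cdot\rangle_E$ lie in a common fibre of $E$; once that is confirmed, invariance does the rest.

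For the first identity I would start from $(\langle u_1,u_2\rangle_{E_{\mathfrak{b}}\times_t G})^*(\gamma)=\overline{\langle u_1,u_2\rangle_{E_{\mathfrak{b}}\times_t G}(\gamma^{-1})}$, the definition of the involution. Since $t(\gamma^{-1})=s(\gamma)$ and $(\gamma^{-1})^{-1}=\gamma$, the right-hand side equals $\int_{Z_{s(\gamma)}}\overline{\langle u_1(p),u_2(p\gamma^{-1})\gamma\rangle_E}\,\mu_{s(\gamma)}(p)$, and conjugate symmetry of the fibrewise inner product turns the integrand into $\langle u_2(p\gamma^{-1})\gamma,u_1(p)\rangle_E$. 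I then substitute $q=p\gamma^{-1}$, so that $p=q\gamma$ runs over $Z_{s(\gamma)}$ as $q$ runs over $Z_{t(\gamma)}$ and density invariance replaces $\mu_{s(\gamma)}(p)$ by $\mu_{t(\gamma)}(q)$; the integrand becomes $\langle u_2(q)\gamma,u_1(q\gamma)\rangle_E$. Applying invariance with $e_1=u_2(q)$ and $e_2=u_1(q\gamma)\gamma^{-1}$ rewrites this as $\langle u_2(q),u_1(q\gamma)\gamma^{-1}\rangle_E$, which is exactly the integrand of $\langle u_2,u_1\rangle_{E_{\mathfrak{b}}\times_t G}(\gamma)$.

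For the second identity I would expand the right-hand side $\langle\nu(f)u_1,u_2\rangle_{E_{\mathfrak{b}}\times_t G}(\gamma_0)$ by inserting $(\nu(f)u_1)(p)=\sum_{t(\gamma)=\pi(p)}f(\gamma)u_1(p\gamma)\gamma^{-1}$ and pulling the finite sum outside the integral, obtaining $\sum_{t(\gamma)=t(\gamma_0)}f(\gamma)\int_{Z_{t(\gamma_0)}}\langle u_1(p\gamma)\gamma^{-1},u_2(p\gamma_0)\gamma_0^{-1}\rangle_E\,\mu_{t(\gamma_0)}(p)$. The reindexing $\gamma'=\gamma^{-1}\gamma_0$ is a bijection between $\{\gamma:t(\gamma)=t(\gamma_0)\}$ and $\{(\gamma,\gamma'):\gamma\gamma'=\gamma_0\}$, so it remains to identify each summand with the corresponding term $f(\gamma)\langle u_1,u_2\rangle_{E_{\mathfrak{b}}\times_t G}(\gamma')$ of the convolution on the left. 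I substitute $q=p\gamma$, use density invariance and $p\gamma_0=q\gamma^{-1}\gamma_0=q\gamma'$, so the integral is over $Z_{s(\gamma)}=Z_{t(\gamma')}$ with integrand $\langle u_1(q)\gamma^{-1},u_2(q\gamma')\gamma_0^{-1}\rangle_E$. Multiplying both entries on the right by $\gamma$ (invariance) gives $\langle u_1(q),u_2(q\gamma')\gamma_0^{-1}\gamma\rangle_E$, and the algebraic identity $\gamma_0^{-1}\gamma=\gamma'^{-1}$ — valid because $\gamma\gamma'=\gamma_0$ — produces $\langle u_1(q),u_2(q\gamma')\gamma'^{-1}\rangle_E$, precisely the integrand of $\langle u_1,u_2\rangle_{E_{\mathfrak{b}}\times_t G}(\gamma')$.

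The main obstacle is not any single step but the fibre bookkeeping: I must verify at every stage that the two entries of $\langle\cdot,\cdot\rangle_E$ sit in the same fibre, so that invariance applies, and that the substitutions respect the identifications $\gamma\colon E_p\to E_{p\gamma}$. The cleanest control is to track the base point in $P$ of each vector — for instance noting $u_2(q\gamma')\gamma_0^{-1}\in E_{q\gamma^{-1}}$ via $q\gamma'\gamma_0^{-1}=q\gamma^{-1}$ — and to record the two key identities $\gamma_0^{-1}\gamma=\gamma'^{-1}$ and $t(\gamma^{-1})=s(\gamma)$ at the outset. Convergence is automatic: $f$ and the $u_i$ have compact support and $G$ acts properly, so only finitely many $\gamma$ contribute and each fibrewise integral is over a compact set.
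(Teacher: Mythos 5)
Your proposal is correct and follows essentially the same route as the paper's proof: unwind the definitions, change variables along the fibre diffeomorphism $p\mapsto p\gamma$ using the $G$-invariance of the densities $\mu_x$, and apply the $G$-invariance of the Hermitian metric, with the only cosmetic difference that for the second identity you compute from $\langle\nu(f)u_1,u_2\rangle$ toward the convolution while the paper goes the other way. The fibre bookkeeping and the reindexing $\gamma'=\gamma^{-1}\gamma_0$ match the paper's computation exactly.
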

\begin{proof}
For any $\gamma_0\in G$ and $\gamma_0\notin G^{(0)}$, we have for the first equality,
\[
\begin{split}
(\langle u_1, u_2\rangle_{E_{\mathfrak{b}}\times_t G})^*(\gamma_0)=&\overline{\langle u_1, u_2\rangle_{E_{\mathfrak{b}}\times_t G}(\gamma_0^{-1})}\\
=&
\int_{Z_{s(\gamma_0)}}\langle u_2(p'\gamma_0^{-1})\gamma_0, u_1(p')\rangle_E~\mu_{s(\gamma_0)}(p')\\
=&
\int_{Z_{t(\gamma_0)}}\langle u_2(p), u_1(p\gamma_0)\gamma_0^{-1}\rangle_E~\mu_{t(\gamma_0)}(p)\\
=&\langle u_2, u_1\rangle_{E_{\mathfrak{b}}\times_t G}(\gamma_0),
\end{split}
\]
because the metric on $E$ is $G$-invariant.

For the second equality,
\[
\begin{split}
(f\star\langle u_1, u_2\rangle_{E_{\mathfrak{b}}\times_t G})(\gamma_0)=&\sum_{\gamma\gamma'=\gamma_0}f(\gamma)\langle u_1, u_2\rangle_{E_{\mathfrak{b}}\times_t G}(\gamma')\\
=&\sum_{\gamma\gamma'=\gamma_0}f(\gamma)\int_{Z_{t(\gamma')}}\langle u_1(p'), u_2(p'\gamma')\gamma'^{-1}\rangle_E~\mu_{t(\gamma')}(p')\\
=&\int_{Z_{t(\gamma_0)}}\langle\sum_{\gamma\gamma'=\gamma_0}f(\gamma)u_1(p\gamma)\gamma^{-1}, u_2(p\gamma_0)\gamma_0^{-1}\rangle_E~\mu_{t(\gamma_0)}(p)\\
=&\langle\nu(f)u_1, u_2\rangle_{E_{\mathfrak{b}}\times_t G}(\gamma_0).
\end{split}
\]
The lemma is proved.
\end{proof}

\subsection{$\Omega_c^{\bullet}(G)$-linear operator}
According to \cite{LeichtnamPiazza+2005+169+233}, the local version of Connes' index theorem makes use of the heat kernel associated to a superconnection. 
In this subsection, we consider $\Omega_c^{\bullet}(G)$-linear smoothing operator, where the heat kernel is an example.
\begin{definition}\label{2024.6.14}
A $\bC$-linear operator $K: \Omega_c^{m, 0}(P\times_tG; E)\rightarrow \Omega_c^{m, k}(P\times_tG; E)$ is called $\Omega_c^{\bullet}(G)$-linear if for any $f\in C_c^{\infty}(G)$ and $F\in\Omega_c^{m, 0}(P\times_tG; E)$, 
$$K(\nu(f)F)=\nu(f)KF.$$
\end{definition}
We define a linear operator $K: \Omega_c^{m, *}(P\times_tG)\rightarrow \Omega_c^{m, *+k}(P\times_tG)$ by generalizing an $\Omega_c^{\bullet}(G)$-linear operator. Explicitly, $$K(\nu(\omega)F)=(-1)^{k(n+k')}\nu(\omega)KF,$$ where $\omega\in\Omega_c^{n, k'}(G)$.

Next, we consider a special type of $\Omega_c^{\bullet}(G)$-linear
operator, which is a smoothing operator along the fiber. 
We consider the manifold 
$$P\times_sG^{(k+l)}\times_tP:=
\{(p, \gamma_{k+l}, \cdots, \gamma_1, q)\left| q\in Z_{t(\gamma_1)},\right.
p\in Z_{s(\gamma_{k+l})}, \text{and}~~~\gamma_1, \cdots, \gamma_{k+l}\notin G^{(0)}\}$$
with maps $t(p, \gamma_{k+l}, \cdots, \gamma_1, q)=q$ and $s(p, \gamma_{k+l}, \cdots, \gamma_1, q)=p$.
Then we have a $G$-equivariant vector bundle over it denoted by 
$$\widehat E:=s^*E\otimes t^*E'.$$
Here $E$ and its dual $E'$ are $G$-equivariant vector bundles over $P$. 
\begin{definition}\label{2024.06.14}
An
$\Omega_c^{\bullet}(G)$-linear smoothing operator
$$K: \Omega_c^{\bullet}(P\times_tG; E)\rightarrow\Omega_c^{0, k}(G)\otimes_{C_c^{\infty}(G)}\Omega_c^{\bullet}(P\times_tG; E)
$$
is an operator of the form 
\begin{equation}\label{2024.5.15}
\begin{split}
&(KF)(p, \gamma_1, \cdots, \gamma_{k+l})\\
:=&(-1)^{kl}\int_{Z_{s(\gamma_{l})}}
K(p\gamma_1\cdots\gamma_{k+l}, \gamma_{k+l}, \cdots, \gamma_{l+1}, q)
F(q\gamma_{l}^{-1}\cdots\gamma_1^{-1}, \gamma_1, \cdots, \gamma_{l})
~\mu_{s(\gamma_{l})}(q),\\
\end{split}
\end{equation}
where the kernel 
$K(p\gamma_1\cdots\gamma_{k+l}, \gamma_k, \cdots, \gamma_1, q)$ is an element in $\Gamma_c^\infty(P\times_s G^{(k)}\times_t P; \widehat E)$
and satisfies Definition \ref{2024.6.14}.
\end{definition}
\begin{notation}
We denote by 
\[\Psi_{C_c^\infty(G)}^{-\infty}(\Omega_c^{\bullet}(P\times_tG; E), \Omega_c^{\bullet}(G)\otimes_{C_c^\infty(G)}\Omega_c^{\bullet}(P\times_tG; E))
\]
the set of left $\Omega_c^{\bullet}(G)$-linear smoothing operator $K$ acting on $\Omega_c^{\bullet}(P\times_tG; E)$ with a smooth integral kernel (cf. \cite[(6.24)]{gorokhovsky2003local}).
\end{notation}
In this paper, for simplicity, we always assume that 
$$K\in \Psi_{C_c^\infty(G)}^{-\infty}(\Omega_c^{\bullet}(P\times_tG; E), \Omega_c^{0, *}(G)\otimes_{C_c^\infty(G)}\Omega_c^{\bullet}(P\times_tG; E)).$$

By definition, each $\gamma\in G^{\pi(p)}$ induces isomorphisms
\[\begin{split}
A_\gamma&: Hom(E_q, E_p)\rightarrow
Hom(E_q, E_{p\gamma}),\\
B_\gamma&: Hom(E_q, E_p)\rightarrow
Hom(E_{q\gamma}, E_{p}).\\
\end{split}\]
In fact, for any $K\in \Psi_{C_c^\infty(G)}^{-\infty}(\Omega_c^{\bullet}(P\times_tG; E), \Omega_c^{0, k}(G)\otimes\Omega_c^{\bullet}(P\times_tG; E))$ and $F\in\Omega_c^{*, 0}(P\times_t G; E)$,
\[\begin{split}
(A_{\gamma}(K(p, \gamma_{k}, \cdots, \gamma_{1}, q)))
F(q):=&\big(K(p, \gamma_{k}, \cdots, \gamma_{1}, q)
F(q)\big)\gamma,\\
(B_\gamma (K(p, \gamma_{k}, \cdots, \gamma_{1}, q)))F(q\gamma)
:=&K(p, \gamma_{k}, \cdots, \gamma_{1}, q)(F(q\gamma)\gamma^{-1}).
\end{split}\]

The parallel properties of Example 3.11 in \cite{so2017non} and its preceding arguments are stated in the following proposition:
\begin{proposition}\label{2024.6.12}
Let $K\in \Psi_{C_c^\infty(G)}^{-\infty}(\Omega_c^{\bullet}(P\times_tG; E), \Omega_c^{0, 1}(G)\otimes\Omega_c^{\bullet}(P\times_tG; E))$,
we have
\begin{equation}\label{2024.6.17}
B_\gamma K(p, \gamma_{1}, q)=
K(p, \gamma^{-1}\gamma_1, q\gamma),
\quad\forall~\gamma\in G^{t(\gamma_1)},
\end{equation}
and
\begin{equation}\label{2022.7.1}
\sum_{\gamma\gamma'=\gamma_{2}}
A_{\gamma^{-1}}K(p\gamma, \gamma, q)
=0.
\end{equation}
\end{proposition}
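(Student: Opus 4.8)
The plan is to read off both identities from the defining relation of $\Omega_c^\bullet(G)$-linearity, $K(\nu(f)F)=\nu(f)(KF)$ (Definition \ref{2024.6.14}), by writing each side explicitly through the kernel formula \eqref{2024.5.15}, the vector representation \eqref{vr}, and the extended action of Definition \ref{2024.2.2}, and then stripping away the fiber integral and the groupoid summation. Throughout I take $k=1$, $F\in\Omega_c^{*,0}(P\times_tG;E)\cong\Gamma_c^\infty(P;E)$, and $f\in C_c^\infty(G)$; by (iv) of Proposition \ref{2024.4.8} it suffices to treat $f$ with support in a single bisection $U$, in which case (v) of Proposition \ref{2024.4.8} collapses every convolution appearing below to a single term with a uniquely determined factorization.

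First I would compute the left-hand side. Expanding $\nu(f)F$ by \eqref{vr} and inserting it into \eqref{2024.5.15} with $k=1,l=0$ gives
\[
(K(\nu(f)F))(p,\gamma_1)=\int_{Z_{t(\gamma_1)}}K(p\gamma_1,\gamma_1,q)\Big(\sum_{\gamma\in G^{t(\gamma_1)}}f(\gamma)F(q\gamma)\gamma^{-1}\Big)\mu_{t(\gamma_1)}(q),
\]
and by the definition of $B_\gamma$ the summand is exactly $f(\gamma)\,(B_\gamma K(p\gamma_1,\gamma_1,q))F(q\gamma)$. I then perform the fiberwise change of variable $q\mapsto q\gamma$, which is licensed by the $G$-invariance of the densities $\{\mu_x\}$ assumed in Section \ref{Setup}; this replaces $q$ by $q\gamma^{-1}$ in the kernel and moves the integration to $Z_{s(\gamma)}$.

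Next I would compute the right-hand side $\nu(f)(KF)$ by applying the action of $f\in C_c^\infty(G)$ on the degree-one object $KF$ (the $k=0$ instance of Definition \ref{2024.2.2}). This produces two kinds of contributions: a ``front'' term in which $f$ is convolved into the leading groupoid slot of $KF$, and a ``back'' term carrying the vector-representation twist $(\cdot)\gamma^{-1}$ acting on $E$. Matching the two sides term by term, and using that $f$ is supported in the bisection $U$ so that the relevant factorization is unique, the back terms force the pointwise relation $B_\gamma K(p,\gamma_1,q)=K(p,\gamma^{-1}\gamma_1,q\gamma)$ after renaming variables---this is \eqref{2024.6.17}---since $F$ is otherwise arbitrary and the fiber pairing $\int_Z\langle\cdot,\cdot\rangle\,\mu$ is nondegenerate. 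The remaining (front) contributions, which survive precisely on the locus where one of the new groupoid arguments becomes a unit, must cancel because $KF$ is only defined in the quotient $\Omega_c^{0,k}$ by forms supported where some $\gamma_j$ is a unit; writing out this cancellation and translating it through $A_\gamma$ yields $\sum_{\gamma\gamma'=\gamma_2}A_{\gamma^{-1}}K(p\gamma,\gamma,q)=0$, which is \eqref{2022.7.1}.

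I expect the main obstacle to be the bookkeeping required to isolate pointwise kernel identities from the integrated and summed equalities: one must track signs and index shifts through Definition \ref{2024.2.2}, justify interchanging the groupoid sum with the fiber integral (finiteness from compact supports), and invoke the $G$-invariant change of variables correctly so that the shift $(\gamma_1,q)\mapsto(\gamma^{-1}\gamma_1,q\gamma)$ emerges cleanly. The reduction to bisection-supported $f$ via Proposition \ref{2024.4.8} and the nondegeneracy of the fiber pairing are what make it legitimate to drop the integral and the sum and conclude equality of kernels pointwise.
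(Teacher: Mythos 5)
Your overall strategy coincides with the paper's: expand both sides of $K(\nu(f)F)=\nu(f)(KF)$ through the kernel formula \eqref{2024.5.15}, the vector representation \eqref{vr} and the module action of Definition \ref{2024.2.2}, change variables fiberwise using the $G$-invariance of the densities, and compare coefficients of $f(\gamma)$ for arbitrary $F$. This is exactly what the paper does in \eqref{2021.1} and \eqref{2021.1.}; your reduction to bisection-supported $f$ and your appeal to nondegeneracy of the fiber pairing are harmless extras (the paper simply compares the summands directly).

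However, you attach the two identities to the wrong terms, and the mechanism you give for \eqref{2022.7.1} would not survive being written out. The left-hand side $K(\nu(f)F)(p,\gamma_1)$ produces a single family of terms indexed by $\gamma\gamma'=\gamma_1$, each carrying $B_\gamma K$ (the twist $(\cdot)\gamma^{-1}$ inside $\nu(f)F$ is precisely what creates $B_\gamma$). The right-hand side $\nu(f)(KF)(p,\gamma_1)$ produces the front term $\sum_{\gamma\gamma'=\gamma_1}f(\gamma)(KF)(p\gamma,\gamma')$ \emph{plus} the twisted back term $-f(\gamma_1)\sum_{t(\gamma)=s(\gamma_1)}(KF)(p\gamma_1,\gamma)\gamma^{-1}$. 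Identity \eqref{2024.6.17} comes from matching the left-hand side against the \emph{front} term (not ``the back terms''), while \eqref{2022.7.1} is forced because the \emph{back} term has no counterpart on the left-hand side at all and must therefore vanish identically for every $f$ and $F$. Your stated reason for \eqref{2022.7.1} --- that the front contributions ``survive precisely on the locus where one of the new groupoid arguments becomes a unit'' and cancel in the quotient --- is not what happens: the sum in \eqref{2022.7.1} runs over all $\gamma$ with prescribed target, not over a unit locus, and the quotient by unit-supported forms plays no role in killing it. If you carry out your plan as written, the two ``twist'' terms will not match each other and the unit-locus argument will produce nothing; re-pairing the terms as above repairs both derivations.
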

\begin{proof}
For $F\in\Omega_c^{*, 0}(P\times_t G; E)=\Omega_c^*(P; E)$ (the same results hold for general $F\in\Omega_c^{*, l}(P\times_t G; E)$), 
we have $K(\nu(f)F)=\nu(f)KF$. 
Then we compute both sides of the preceding equation:
\begin{equation}\label{2021.1}
\begin{split}
&K(\nu(f)F)(p, \gamma_1)\\
=&\int_{Z_{t(\gamma_1)}}
K(p\gamma_1, \gamma_{1}, q)
(\nu(f)F)(q)
~\mu_{t(\gamma_1)}(q)\\
=&\int_{Z_{t(\gamma_1)}}
K(p\gamma_1, \gamma_{1}, q)
\big(\sum_{\gamma\gamma'=\gamma_1}
f(\gamma)F(q\gamma)\gamma^{-1}\big)~\mu_{t(\gamma_1)}(q)\\
=&\sum_{\gamma\gamma'=\gamma_1}
f(\gamma)
\int_{Z_{t(\gamma_1)}}
B_\gamma K(p\gamma_1, \gamma_{1}, q)F(q\gamma)~\mu_{t(\gamma_1)}(q),\\
\end{split}
\end{equation}

and
\begin{equation}\label{2021.1.}
\begin{split}
&\nu(f)(KF)(p, \gamma_1)\\
=&\sum_{\gamma\gamma'=\gamma_1}f(\gamma)
KF(p\gamma, \gamma')
-\sum_{\gamma\gamma'=\gamma_2}f(\gamma_1)
KF(p\gamma_1, \gamma)\gamma^{-1}
\\
=&\sum_{\gamma\gamma'=\gamma_{1}}f(\gamma)
\int_{Z_{t(\gamma')}}
K(p\gamma_1, \gamma', q')
F(q')
~\mu_{t(\gamma')}(q')\\
&-\sum_{\gamma\gamma'=\gamma_{2}}f(\gamma_1)
\left(\int_{Z_{s(\gamma_{1})}}K(p\gamma_1\gamma, \gamma, q)
F(q)
~\mu_{s(\gamma_{1})}(q)\right)\gamma^{-1}\\
=&\sum_{\gamma\gamma'=\gamma_{1}}f(\gamma)
\int_{Z_{t(\gamma)}}
K(p\gamma_1, \gamma^{-1}\gamma_1, q\gamma)
F(q\gamma)~\mu_{t(\gamma)}(q)\\
&-\sum_{\gamma\gamma'=\gamma_{2}}f(\gamma_1)
\int_{Z_{s(\gamma_{1})}}A_{\gamma^{-1}}K(p\gamma_1\gamma, \gamma, q)
F(q)
~\mu_{s(\gamma_{1})}(q).\\
\end{split}
\end{equation}

We rename $p'=p\gamma_1$ (then replacing $p'$ by $p$) and recall that $G^x:=\{\gamma\in G|t(\gamma)=x\}$.

Comparing the first term in \eqref{2021.1} and \eqref{2021.1.}, we get
\[
B_\gamma K(p, \gamma_{1}, q)=
K(p, \gamma^{-1}\gamma_1, q\gamma),
\quad\forall~\gamma\in G^{t(\gamma_1)}.
\]
Comparing the second term in \eqref{2021.1} and \eqref{2021.1.}, we get
\[
\int_{Z_{s(\gamma_{1})}}\sum_{\gamma\gamma'=\gamma_{2}}
A_{\gamma^{-1}}K(p\gamma, \gamma, q)
F(q)
~\mu_{s(\gamma_{1})}(q)=0.
\]
This implies that
\[
\sum_{\gamma\gamma'=\gamma_{2}}
A_{\gamma^{-1}}K(p\gamma, \gamma, q)
=0.
\]
This completes the proof.
\end{proof}
\begin{remark}
In fact the arguments in above proposition hold for any $K\in \Psi_{C_c^\infty(G)}^{-\infty}(\Omega_c^{\bullet}(P\times_tG; E), \Omega_c^{0, k}(G)\otimes\Omega_c^{\bullet}(P\times_tG; E))$, and one can obtain similar necessary conditions.
\end{remark}
If we define multiplication of smooth kernels in $\Gamma_c^\infty(P\times_s G^{(k+l)}\times_t P; \widehat E)$ by
$$(K_1\star K_2)(p, \gamma_{k+l}, \cdots, \gamma_{1},  q):=
(-1)^{kl}
\int_{Z_{s(\gamma_l)}}
K_1(p, \gamma_{k+l}, \cdots, \gamma_{l+1}, p')
K_2(p', \gamma_l, \cdots, \gamma_{1}, q)
~\mu_{s(\gamma_l)}(p'),$$
then by direct calculation using Equation \eqref{2024.5.15} and kernel multiplication formula, 
we get
$$K_2(K_1F)=(K_2\star K_1)F.$$

\subsection{The Bismut superconnection}
Recall the grading $E^{\bullet}:=E\otimes\wedge^\bullet V_P'$.

By \cite[(6.13)]{gorokhovsky2003local},
we may fix a non-negative function $h\in C_c^{\infty}(P)$ such that
\begin{equation}\label{2022.10.4}
\sum_{\gamma\in G^{\pi(p)}}h(p\gamma)=1
\end{equation} 
for all $p\in P$, and we define
\[
\begin{split}
\nabla^{0, 1}: \Gamma_c^{\infty}(P; E^\bullet)&\longrightarrow \Omega_c^{0, 1}(P\times_t G; E^\bullet)\\
F&\longmapsto\nabla^{0, 1}F: P\times_t G\longrightarrow s_1^*E^\bullet
\end{split}
\]by
$$(\nabla^{0, 1}F)(p_0, \gamma_0)=
\left(F(p_0)\gamma_0\right)h(p_0\gamma_0).$$
Then we can define a noncommutative superconnection on the Bismut bundle.

\begin{definition}
Let $D:=D_B+\nabla^{0, 1}$ denote a superconnection over the Bismut bundle and $D':=D_B'+\nabla^{0, 1}$ denote its adjoint. 
\end{definition}
Put $D(u)=uD+(1-u)D', u\in [0, 1]$. 
Similar to the paragraph above Definition \ref{2024.7.2}, given $p\in\pi^{-1}(t(U))$, we define an action of $S(G)$ on $P$ by
$$\theta_{U}(p)=p\cdot t|_U^{-1}(\pi(p)).$$
Then we have an action on $\Gamma_c^{\infty}(P; E^\bullet)$, which is induced by $\theta_{U}$ and is given by
$$([\theta_{U}]^*F)(p)=F(p\cdot t|_U^{-1}(\pi(p)))\cdot (t|_U^{-1}(\pi(p)))^{-1}.$$
A (commutative) superconnection $\nabla$ on $E^\bullet$ is called $G$-invariant if 
\begin{equation}\label{024.7.2}
\nabla([\theta_{U}]^*F)=[\theta_{U}]^*(\nabla F).
\end{equation}
In the following, we assume that all the (commutative) superconnections are $G$-invariant
and we define the exterior differential operator according to Definition \ref{2022.9.20}.
Then we have a parallel result of \cite[Lemma 3.22]{so2017non}.
\begin{lemma}
The operator $D_B+\nabla^{0, 1}$ satisfies the axiom in Definition \ref{2024.6.15}, it is a connection of the $C_c^{\infty}(G)$-module $\Gamma_c^{\infty}(P; E^\bullet)$.
\end{lemma}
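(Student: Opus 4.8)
The plan is to verify the Leibniz identity of Definition \ref{2024.6.15}, namely
$$(D_B+\nabla^{0,1})(\nu(f)F)=\nu(f)(D_B+\nabla^{0,1})F+\nu((d_1+d_2)f)F,\quad f\in C_c^\infty(G),$$
by splitting it along the bigrading. Using Definition \ref{2022.9.20} I write $D_B=d_V^{\nabla^E}+L^{E_{\mathfrak{b}}^\bullet}+\iota_\theta$, and I observe that $D_B$ raises the de Rham (the $m$) degree while $\nabla^{0,1}$ raises the simplicial (the $n$) degree; since $d_1f$ sits in $m$-degree one and $d_2f$ in $n$-degree one, it suffices to prove the two separate identities $D_B(\nu(f)F)=\nu(f)D_BF+\nu(d_1f)F$ and $\nabla^{0,1}(\nu(f)F)=\nu(f)\nabla^{0,1}F+\nu(d_2f)F$ and then add them. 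This reduces the lemma to analysing the four constituent operators one at a time.

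First, for the two fiberwise terms $d_V^{\nabla^E}$ and $\iota_\theta$ I claim they are $C_c^\infty(G)$-linear, i.e. they commute with $\nu(f)$ and contribute nothing to $d_1f$. Indeed $\nu(f)F(p)=\sum_{t(\gamma)=\pi(p)}f(\gamma)F(p\gamma)\gamma^{-1}$, and along a single fibre $Z_x$ the base point $\pi(p)=x$ is fixed, so the index set $G^x$ and the scalars $f(\gamma)$ are locally constant there; hence a purely vertical operator passes through the sum and, being $G$-invariant by \eqref{024.7.2}, commutes with the action $(\,\cdot\,)\gamma$, giving $d_V^{\nabla^E}(\nu(f)F)=\nu(f)d_V^{\nabla^E}F$ and likewise for $\iota_\theta$. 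The horizontal term $L^{E_{\mathfrak{b}}^\bullet}$ is a genuine connection in the $G^{(0)}$-direction, so differentiating $\nu(f)F$ obeys a product rule: the derivative falling on $F(p\gamma)\gamma^{-1}$ reproduces $\nu(f)L^{E_{\mathfrak{b}}^\bullet}F$ by $G$-invariance, while the derivative falling on the coefficient $f(\gamma)$ produces the correction term. To make the latter precise I localise $f$ to a bisection $U$ (Proposition \ref{2024.04.08} and the bisection calculus of Subsection \ref{Bisection}), so that $\gamma=t|_U^{-1}(x)$ and $f(\gamma)=f\circ t|_U^{-1}(x)$ becomes an honest function of the base point; since $t|_U$ is a diffeomorphism and $G$ is \'etale, its base-direction derivative is exactly the $t$-pullback of $d_1f$, and summing over a bisection cover identifies the correction with $\nu(d_1f)F$. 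This settles the $D_B$-half.

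For the $\nabla^{0,1}$-half I compute directly from $(\nabla^{0,1}F)(p_0,\gamma_0)=(F(p_0)\gamma_0)h(p_0\gamma_0)$. After the reindexing $\gamma'=\gamma^{-1}\gamma_0$ (so that $\gamma\gamma'=\gamma_0$ as $\gamma$ runs over $G^{t(\gamma_0)}$) one finds
$$(\nabla^{0,1}(\nu(f)F))(p_0,\gamma_0)=\big((\nu(f)F)(p_0)\gamma_0\big)h(p_0\gamma_0)=\sum_{\gamma\gamma'=\gamma_0}f(\gamma)\big(F(p_0\gamma)\gamma'\big)h(p_0\gamma_0).$$
It remains to match this against $\nu(f)\nabla^{0,1}F+\nu(d_2f)F$: expanding $\nu(f)\nabla^{0,1}F$ by the extended vector representation of Definition \ref{2024.2.2} and inserting $(d_2f)(\gamma_0;\gamma_1)=\chi_{G^{(0)}}(\gamma_0)f(\gamma_1)$ into $\nu(d_2f)F$, the unit-supported term supplies the degenerate splitting $\gamma=1_{t(\gamma_0)}$, $\gamma'=\gamma_0$ while the remaining terms account for the proper splittings $\gamma\gamma'=\gamma_0$. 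This is the \'etale analogue of the crossed-product computation, and the simplicial bookkeeping runs parallel to \cite[Lemma 3.22]{so2017non}.

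The hard part will be the $L^{E_{\mathfrak{b}}^\bullet}$ step: differentiating the groupoid sum $\sum_\gamma f(\gamma)F(p\gamma)\gamma^{-1}$ in the base direction is only meaningful once the discrete, $x$-dependent index set $G^x$ has been trivialised, and this is exactly what the bisection decomposition provides. Identifying the coefficient derivative with $\nu(d_1f)F$ — rather than merely with \emph{some} correction one-form — is where both the \'etale hypothesis (the local homeomorphism $t$, hence $T_\gamma G\cong T_{t(\gamma)}G^{(0)}$) and the $G$-invariance \eqref{024.7.2} are indispensable. By comparison the $\nabla^{0,1}$ computation, though it demands care with the simplicial indices and signs, is routine once the reindexing above is in place.
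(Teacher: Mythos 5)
Your proposal is correct and follows essentially the same route as the paper: both reduce the $D_B$-identity to functions supported on a single bisection, where $\nu(f_i)F$ becomes $((t|_{U_i}^{-1}\circ\pi)^*f_i)\,[\theta_{U_i}]^*F$ so that the $G$-invariance \eqref{024.7.2} and the Leibniz rule yield the correction $\nu(d_1f_i)F$ (the paper applies Leibniz to $D_B$ wholesale rather than term by term, but this is the same observation that only the horizontal part $L^{E_{\mathfrak{b}}^\bullet}$ acts on the base-pulled-back coefficient), and both handle $\nabla^{0,1}$ by the same direct computation in which the extra term collapses against $\nu(d_2f)F$ via $\sum_{\gamma}h(p\gamma)=1$.
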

\begin{proof}For $f\in C_c^{\infty}(G)$ and $F\in \Gamma_c^{\infty}(P; E^\bullet)$, we first verify that
$$D_B(\nu(f)F)=\nu(f)(D_BF)+\nu(d_1f)F.$$
By (iv) of Proposition \ref{2024.4.8}, $f=\sum_i f_i$, where $supp(f_i)$ is in a bisection $U_i$. Thus we need to prove
$$D_B(\nu(f_i)F)=\nu(f_i)(D_BF)+\nu(d_1f_i)F.$$
From Equation \eqref{024.7.2}, the exterior differential operator $D_B$ on $E^\bullet$ commutes with $[\theta_{U_i}]^*$,
it follows that
\[\begin{split}
D_B\big(\nu(f_i)F\big)=&D_B\big(((t|_{U_i}^{-1}\circ\pi)^*f_i)([\theta_{U_i}]^*F)\big)\\
=&(t|_{U_i}^{-1}\circ\pi)^*f_i~D_B\big([\theta_{U_i}]^*F\big)+(t|_{U_i}^{-1}\circ\pi)^*(d_1f_i)~[\theta_{U_i}]^*F\\
=&(t|_{U_i}^{-1}\circ\pi)^*f_i~[\theta_{U_i}]^*(D_BF)+\nu(d_1f_i)F\\
=&\nu(f_i)(D_BF)+\nu(d_1f_i)F.
\end{split}\]

Then we verify that
$$\nabla^{0, 1}(\nu(f)F)(p\gamma_0^{-1}, \gamma_0)=
\nu(f)(\nabla^{0, 1}F)(p\gamma_0^{-1}, \gamma_0)+
\nu(d_2f)F(p\gamma_0^{-1}, \gamma_0).$$
We have
\[
\begin{split}
\nabla^{0, 1}(\nu(f)F)(p\gamma_0^{-1}, \gamma_0)&=
\big((\nu(f)F)(p\gamma_0^{-1})\gamma_0\big)h(p)
=\sum_{t(\gamma)=\pi(p\gamma_0^{-1})}
f(\gamma)
(F(p\gamma_0^{-1}\gamma)\gamma^{-1}\gamma_0)h(p),\\
\nu(f)(\nabla^{0, 1}F)(p\gamma_0^{-1}, \gamma_0)&=
\sum_{\gamma\gamma'=\gamma_0}f(\gamma)\nabla^{0, 1}F(p\gamma_0^{-1}\gamma, \gamma')-
f(\gamma_0)\sum_{t(\gamma)=s(\gamma_0)}\nabla^{0, 1}F(p, \gamma)\gamma^{-1}
\\&=
\sum_{\gamma\gamma'=\gamma_0}f(\gamma)(
F(p\gamma_0^{-1}\gamma)\gamma^{-1}\gamma_0)h(p)-
f(\gamma_0)\sum_{t(\gamma)=s(\gamma_0)}
(F(p)\gamma\gamma^{-1})h(p\gamma),\\
(\nu(d_2f)F)(p\gamma_0^{-1}, \gamma_0)
&=\sum_{\gamma\gamma'=\gamma_0}
\chi_{G^{(0)}}(\gamma)f(\gamma')F(p)
-\sum_{\gamma\gamma'=\gamma_1}
\chi_{G^{(0)}}(\gamma_0)f(\gamma)F(p\gamma)\gamma^{-1}
=f(\gamma_0)F(p)
\end{split}
\]
for $\gamma_0\notin G^{(0)}.$
\end{proof}
\begin{corollary}
The curvature $D(u)^2$ are $\Omega_c^{\bullet}(G)$-linear. 
\end{corollary}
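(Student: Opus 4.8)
The plan is to exhibit $D(u)^2$ as the curvature of a connection on the graded $\Omega_c^{\bullet}(G)$-module $\Omega_c^{\bullet}(P\times_t G; E^\bullet)$ and then to invoke the standard fact that the curvature of any connection over a graded differential algebra is module-linear. Concretely, $\Omega_c^{\bullet}(G)$-linearity in the sense of Definition \ref{2024.6.14} means $D(u)^2(\nu(f)F)=\nu(f)\,D(u)^2F$ for every $f\in C_c^\infty(G)$ and $F\in\Omega_c^{m,0}(P\times_tG; E^\bullet)$, so this is exactly what I aim to produce.

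First I would upgrade the preceding lemma from $D=D_B+\nabla^{0,1}$ to the whole family $D(u)$. That lemma establishes the connection axiom of Definition \ref{2024.6.15}, namely $D(\nu(f)F)=\nu(f)(D_BF)+\nu((d_1+d_2)f)F$, using the bisection decomposition $f=\sum_i f_i$ and the commutation of $D_B$ with $[\theta_{U_i}]^*$. The adjoint $D'=D_B'+\nabla^{0,1}$ is built from the same $\nabla^{0,1}$ and from $D_B'$, which by our standing assumption is $G$-invariant in the sense of \eqref{024.7.2}; rerunning the identical computation yields $D'(\nu(f)F)=\nu(f)(D'F)+\nu((d_1+d_2)f)F$. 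Since $D(u)=uD+(1-u)D'$ and $u+(1-u)=1$, the inhomogeneous Leibniz term survives with coefficient $1$, so each $D(u)$ also satisfies the connection axiom.

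Next I would extend $D(u)$ to a degree-one graded derivation of the full module with respect to the $\nu$-action of Definition \ref{2024.2.2}, establishing the graded Leibniz rule $D(u)(\nu(\omega)F)=\nu((d_1+d_2)\omega)F+(-1)^{|\omega|}\nu(\omega)\,D(u)F$ for homogeneous $\omega\in\Omega_c^{\bullet}(G)$. With this in hand the corollary is a formal two-line computation: applying $D(u)$ twice to $\nu(f)F$ produces the two cross terms $\pm\nu((d_1+d_2)f)\,D(u)F$, which cancel because $(d_1+d_2)f$ has degree $1$ (so the Koszul sign is $-1$), while the remaining curvature-of-the-base term $\nu((d_1+d_2)^2f)F$ vanishes since $d_1+d_2$ is a differential. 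What survives is precisely $D(u)^2(\nu(f)F)=\nu(f)\,D(u)^2F$, the desired $\Omega_c^{\bullet}(G)$-linearity.

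I expect the main obstacle to be the sign bookkeeping in the middle step, namely checking that the extended representation $\nu$ of Definition \ref{2024.2.2} is compatible with $d_1+d_2$ so that the graded Leibniz rule holds for forms of positive degree; the preceding lemma supplies only the degree-zero case directly, and the cancellation above genuinely uses the degree-one instance for $(d_1+d_2)f$. Once the correct signs are pinned down, the cancellation is automatic and independent of $u$; equivalently, one sees that $D(u)^2$ is realized as the action of a $2$-form valued in $G$-invariant fiberwise endomorphisms, which commutes with the module action exactly as the form $\Theta$ does for $\nabla^{\text{can}}$.
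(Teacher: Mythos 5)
Your argument is correct and is essentially the one the paper intends: the corollary is stated without proof immediately after the lemma establishing that $D_B+\nabla^{0,1}$ (and, by the same computation and the standing $G$-invariance assumption, $D_B'+\nabla^{0,1}$) satisfies the connection axiom, so $D(u)=uD+(1-u)D'$ is again a connection and its square is module-linear by the standard curvature computation you carry out. Your explicit attention to the degree-one graded Leibniz rule (the ingredient the paper supplies only partially, via Remark \ref{2024.2.1} for $\nabla^{0,1}$) is a fair identification of the only step requiring care, and it goes through as you describe.
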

\begin{remark}\label{2024.2.1}
For $F\in\Gamma_c^{\infty}(E)\bigotimes_{C_c^{\infty}(G)}\Omega_c^{*,l}(G),$ we let
\[\begin{split}
(\nabla^{0, 1}F)(p\gamma_{l}^{-1}\cdots\gamma_0^{-1}, \gamma_0, \cdots, \gamma_{l})
:=&(-1)^l\big(F(p\gamma_{l}^{-1}\cdots\gamma_0^{-1}, \gamma_0, \cdots, \gamma_{l-1})\gamma_l\big)h(p).\\
\end{split}\]
We also have $\nabla^{0, 1}(\nu(\omega)F)=
(-1)^l\nu(\omega)(\nabla^{0, 1}F)+\nu(d_2\omega)F$, for any $\omega\in\Omega_c^{0, l}(G)$ and $F\in\Gamma_c^{\infty}(P; E)$.
\end{remark}

The following lemma is analogous to the statement above \cite[Lemma 9.15]{berline2003heat}.
\begin{lemma}
If $K$ is an $\Omega_c^{\bullet}(G)$-linear smoothing operator, then $[D, K ]$ is also an $\Omega_c^{\bullet}(G)$-linear smoothing operator.
\end{lemma}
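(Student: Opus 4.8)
The plan is to verify the two defining properties separately: that $[D,K]$ is again $\Omega_c^{\bullet}(G)$-linear in the sense of Definition \ref{2024.6.14}, and that it is again represented by a smooth kernel as in Definition \ref{2024.06.14}. Since $D=D_B+\nabla^{0,1}$ is odd of total degree $1$ and $K$ shifts total degree by $k$, I work with the graded commutator $[D,K]:=DK-(-1)^{k}KD$. The essential structural input is the preceding lemma, which asserts that $D$ obeys the connection identity of Definition \ref{2024.6.15}, namely $D(\nu(f)F)=\nu(f)(DF)+\nu((d_1+d_2)f)F$ for $f\in C_c^{\infty}(G)$. The first step is to extend this, using the derivation property of $d_1+d_2$ together with the module structure of Definition \ref{2024.2.2}, to the graded Leibniz rule
\[
D(\nu(\omega)G)=\nu((d_1+d_2)\omega)G+(-1)^{n+k'}\nu(\omega)(DG),\qquad \omega\in\Omega_c^{n,k'}(G),
\]
valid for $G\in\Omega_c^{\bullet}(P\times_tG;E)$. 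Pinning down this rule with the correct Koszul signs is the starting point for everything that follows.

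For linearity I would compute $[D,K](\nu(f)F)$ directly for $f\in C_c^{\infty}(G)$. Applying the $\Omega_c^{\bullet}(G)$-linearity of $K$ in its graded form $K(\nu(\omega)F)=(-1)^{k(n+k')}\nu(\omega)KF$ and the Leibniz rule above, the term $DK(\nu(f)F)$ expands to $\nu(f)D(KF)+\nu((d_1+d_2)f)KF$, while $(-1)^{k}KD(\nu(f)F)$ expands to $(-1)^{k}\nu(f)K(DF)+\nu((d_1+d_2)f)KF$; here the prefactor $(-1)^{k}$ from the commutator and the sign $(-1)^{k}$ produced by applying the graded linearity of $K$ to the total-degree-one form $(d_1+d_2)f$ multiply to $+1$ on the anomalous term. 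Subtracting, the two copies of $\nu((d_1+d_2)f)KF$ cancel exactly and one is left with $[D,K](\nu(f)F)=\nu(f)\bigl(D(KF)-(-1)^{k}K(DF)\bigr)=\nu(f)[D,K]F$. This cancellation of the connection anomaly is the algebraic core of the statement and mirrors the classical reason a commutator of a connection with a module map is a module map.

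For the smoothing property I would analyze the integral kernel through the representation \eqref{2024.5.15}. The summand $\nabla^{0,1}$ acts algebraically, through the groupoid action and multiplication by the cut-off $h$, so $[\nabla^{0,1},K]$ is represented by a kernel obtained from that of $K$ by these same operations, which is again smooth and compactly supported. The summand $D_B=d_V^{\nabla^E}+L^{E_{\mathfrak b}^{\bullet}}+\iota_\theta$ is, term by term, either a first-order operator along the fibers or a connection along the base; in the composite $DK$ it differentiates the kernel $K(p,\gamma_k,\dots,\gamma_1,q)$ in its outgoing $p$-argument, whereas in $KD$ the formal transpose of $D_B$ moves the derivative onto the incoming $q$-argument under the fiber integral. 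Hence $[D_B,K]$ is represented by $(D_B)_p$ and $(D_B)_q^{\,t}$ applied to the smooth kernel, which lies in $\Gamma_c^{\infty}(P\times_sG^{(\bullet)}\times_tP;\widehat E)$; one then checks that the resulting kernel still satisfies the constraints of Proposition \ref{2024.6.12}, so that $[D,K]$ is a genuine element of the smoothing class.

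I expect the principal obstacle to be twofold. On the algebraic side it is the sign bookkeeping: fixing the graded Leibniz rule on forms of mixed bidegree and propagating the Koszul signs correctly through the commutator so that the anomalous terms cancel rather than reinforce. On the analytic side it is writing the kernel of $[D_B,K]$ explicitly enough, term by term in $d_V^{\nabla^E}+L^{E_{\mathfrak b}^{\bullet}}+\iota_\theta$, to confirm both its membership in the correct smoothing class and its compatibility with the necessary conditions \eqref{2024.6.17} and \eqref{2022.7.1} of Proposition \ref{2024.6.12}.
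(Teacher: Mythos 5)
Your overall strategy is the same as the paper's: reduce to the ``commutator of a connection with a module map is a module map'' mechanism and show that the two anomalous terms $\nu((d_1+d_2)f)KF$ cancel in the graded commutator. The paper likewise first discards the fiberwise parts $d_V^{\nabla^E}$ and $\iota_\theta$ of $D_B$, handles $[L^{E_{\mathfrak b}^\bullet},K]$ by exactly your abstract cancellation, and then treats $[\nabla^{0,1},K]$. The one place where your write-up glosses over the actual content is the step where you expand $(-1)^kKD(\nu(f)F)$ ``by applying the graded linearity of $K$ to the total-degree-one form $(d_1+d_2)f$.'' For the $d_2$ part this is not a free move: the element $\nabla^{0,1}(\nu(f)F)=\nu(f)\nabla^{0,1}F+\nu(d_2f)F$ lives in bidegree $(*,1)$, and the action of $K$ on such elements is \emph{defined} by the integral kernel formula \eqref{2024.5.15}, not by the graded identity $K(\nu(\omega)F)=(-1)^{k(n+k')}\nu(\omega)KF$; the compatibility of the two is exactly what has to be checked. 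The paper therefore verifies the identities $\nabla^{0,1}K(\nu(f)F)=\nu(f)\nabla^{0,1}KF+\nu(d_2f)KF$ and $K\nabla^{0,1}(\nu(f)F)=\nu(f)K\nabla^{0,1}F-\nu(d_2f)KF$ by explicit kernel computations, and the cancellation only closes after invoking the necessary conditions \eqref{2024.6.17} and \eqref{2022.7.1} of Proposition \ref{2024.6.12} together with the partition-of-unity identity \eqref{2022.10.4}. You do flag \eqref{2024.6.17} and \eqref{2022.7.1} at the end, but you file them under the smoothing-kernel verification, whereas they are in fact needed already for the linearity statement itself; conversely, the paper's proof does not spell out the smooth-kernel membership that you discuss. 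So: same route and correct algebraic core, but the $\nabla^{0,1}$ step needs the concrete kernel computation you have deferred, and your cleaner abstract phrasing buys brevity only once that consistency check is actually carried out.
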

\begin{proof}
Since the first and third terms of $D_B$ are $\Omega_c^{\bullet}(G)$-linear operator, we only need to verify that $[L^{E_{\mathfrak{b}}^\bullet}+\nabla^{0,1}, K]$ is $\Omega_c^{\bullet}(G)$-linear. Consider the case when $k=1$. Other cases are similar.

First, it's easy to verify
\[\begin{split}
[L^{E_{\mathfrak{b}}^\bullet}, K](\nu(f)F)&=(L^{E_{\mathfrak{b}}^\bullet}K+KL^{E_{\mathfrak{b}}^\bullet})(\nu(f)F)\\
&=\nu(f)L^{E_{\mathfrak{b}}^\bullet}KF+\nu(d_1f)KF+\nu(f)KL^{E_{\mathfrak{b}}^\bullet}F-\nu(d_1f)KF\\
&=\nu(f)[L^{E_{\mathfrak{b}}^\bullet}, K]F.
\end{split}\]
Thus we are reduced to showing that $[\nabla^{0,1}, K]$ is $\Omega_c^{\bullet}(G)$-linear. Observe that
\[\begin{split}
[\nabla^{0,1}, K](\nu(f)F)&=(\nabla^{0,1}K+K\nabla^{0,1})(\nu(f)F)
\end{split}\]
and
\[\begin{split}
\nu(f)[\nabla^{0,1}, K]F=\nu(f)\nabla^{0,1}KF+\nu(d_2f)KF+\nu(f)K\nabla^{0,1}F-\nu(d_2f)KF.\\
\end{split}\]
Then the lemma follows from the equations 
\[\begin{split}\nabla^{0,1}K(\nu(f)F)=\nu(f)\nabla^{0,1}KF+\nu(d_2f)KF,\\
(K\nabla^{0,1})(\nu(f)F)=\nu(f)K\nabla^{0,1}F-\nu(d_2f)KF.
\end{split}\]
By Remark \ref{2024.2.1}, we compute directly
\[\begin{split}
&\nabla^{0,1}(\nu(f)KF)(p\gamma_1^{-1}\gamma_0^{-1}, \gamma_0, \gamma_1)\\
=&-(\nu(f)KF)(p\gamma_1^{-1}\gamma_0^{-1}, \gamma_0)\gamma_1h(p)\\
=&-\sum_{\gamma\gamma'=\gamma_0}f(\gamma)
KF(p\gamma_{1}^{-1}\gamma_0^{-1}\gamma, \gamma')\gamma_1h(p)
+\sum_{\gamma\gamma'=\gamma_1}f(\gamma_0)
(KF(p\gamma_{1}^{-1}, \gamma)\gamma^{-1})\gamma_1h(p),
\end{split}\]

\[\begin{split}
&\nu(f)(\nabla^{0,1}KF)(p\gamma_1^{-1}\gamma_0^{-1}, \gamma_0, \gamma_1)\\
=&-\sum_{\gamma\gamma'=\gamma_0}f(\gamma)
\big(KF(p\gamma_{1}^{-1}\gamma_0^{-1}\gamma, \gamma')\gamma_1\big)h(p)\\
&+\sum_{\gamma\gamma'=\gamma_1}f(\gamma_0)
\big(KF(p\gamma_{1}^{-1}, \gamma)\gamma'\big)h(p)
-\sum_{\gamma\gamma'=\gamma_2}f(\gamma_0)
\big(KF(p'\gamma^{-1}\gamma_{1}^{-1}, \gamma_1)\gamma\big)\gamma^{-1}h(p\gamma)\\
=&\nabla^{0,1}(\nu(f)KF)(p\gamma_1^{-1}\gamma_0^{-1}, \gamma_0, \gamma_1)
-f(\gamma_0)KF(p\gamma_{1}^{-1}, \gamma_1),
\end{split}\]
and from Definition \ref{2024.2.2},
\[\begin{split}
&(\nu(d_2f)KF)(p\gamma_1^{-1}\gamma_0^{-1}, \gamma_0, \gamma_1)\\
=&\bigg\{
\sum_{\gamma\gamma'=\gamma_0}d_2f(\gamma; \gamma')\wedge(KF)(p\gamma_1^{-1}, \gamma_1)\\
&-\sum_{\gamma\gamma'=\gamma_1}d_2f(\gamma_0; \gamma)\wedge(KF)(p\gamma_1^{-1}\gamma, \gamma')
+\sum_{t(\gamma)=s(\gamma_1)}d_2f(\gamma_0; \gamma_1)\wedge(KF)(p, \gamma)\gamma^{-1}
\bigg\}\\
=&f(\gamma_0)KF(p\gamma_{1}^{-1}, \gamma_1).
\\
\end{split}\]
Then we prove that $K\nabla^{0,1}(\nu(f)F)=\nu(f)K\nabla^{0,1}F-\nu(d_2f)KF$.
Recall Equation \eqref{2024.5.15}, we get
\[\begin{split}
&K\nabla^{0,1}(\nu(f)F)(p\gamma_1^{-1}\gamma_0^{-1},\gamma_0, \gamma_1)\\
=&-\int_{Z_{t(\gamma_1)}}K(p, \gamma_1, q)
\sum\limits_{t(\gamma)=\pi(q\gamma_0^{-1})}f(\gamma)(F(q\gamma_0^{-1}\gamma)\gamma^{-1}\gamma_0)h(q)~\mu_{t(\gamma_1)}(q),\\
&\nu(d_2f)KF(p\gamma_1^{-1}\gamma_0^{-1}, \gamma_0, \gamma_1)\\
=&f(\gamma_0)\int_{Z_{t(\gamma_1)}}K(p, \gamma_1, q)F(q)~\mu_{t(\gamma_1)}(q)
\end{split}\]
and
\[\begin{split}
&\nu(f)K\nabla^{0,1}F(p\gamma_1^{-1}\gamma_0^{-1}, \gamma_0, \gamma_1)\\
=&-\sum_{\gamma\gamma'=\gamma_0}f(\gamma)
\int_{Z_{t(\gamma_1)}}K(p, \gamma_1, q)
(F(q\gamma'^{-1})\gamma')h(q)~\mu_{t(\gamma_1)}(q)\\
&+\sum_{\gamma\gamma'=\gamma_1}f(\gamma_0)
\int_{Z_{s(\gamma)}}K(p, \gamma', q)
(F(q\gamma^{-1})\gamma)h(q)~\mu_{s(\gamma)}(q)\\
&-\sum_{\gamma\gamma'=\gamma_2}f(\gamma_0)
\left(\int_{Z_{s(\gamma_1)}}K(p\gamma, \gamma, q)
(F(q\gamma_1^{-1})\gamma_1)h(q)~\mu_{s(\gamma_1)}(q)\right)\gamma^{-1}\\
=&K\nabla^{0,1}(\nu(f)F)(p\gamma_1^{-1}\gamma_0^{-1}, \gamma_0, \gamma_1)
+\sum_{\gamma\gamma'=\gamma_1}f(\gamma_0)
\int_{Z_{t(\gamma_1)}}K(p, \gamma', q'\gamma)
(F(q')\gamma)h(q'\gamma)~\mu_{t(\gamma_1)}(q')\\
&-f(\gamma_0)
\int_{Z_{s(\gamma_1)}}
\sum_{\gamma\gamma'=\gamma_2}A_{\gamma^{-1}}K(p\gamma, \gamma, q)
(F(q\gamma_1^{-1})\gamma_1)h(q)~\mu_{s(\gamma_1)}(q).\\
\end{split}\]
The claim follows from Equations \eqref{2024.6.17}, \eqref{2022.10.4} and \eqref{2022.7.1}.
\end{proof}

\subsection{Trace}
In this subsection, we generalize the pointwise trace mentioned in Subsection \ref{Chern class} to the Bismut bundle. 
Our definition follows \cite[Section 3]{gorokhovsky2003local}.

We begin with defining the trace for $\Omega_c^{\bullet}(G)$-linear smoothing operators
(cf. \cite[(6.28)]{gorokhovsky2003local}).
\begin{definition}
The trace of the $\Omega_c^{\bullet}(G)$-linear operator is defined by
$$
Tr_{<e>}:
\Psi_{C_c^\infty(G)}^{-\infty}(\Omega_c^{\bullet}(P\times_tG; E), \Omega_c^{\bullet}(G)\otimes_{C_c^\infty(G)}\Omega_c^{\bullet}(P\times_tG; E))
\longrightarrow\Omega_c^{\bullet}(G)_{\rm{Ab}}
$$
\begin{equation}\begin{split}\label{2022.5.15}
Tr_{<e>}(K)(\gamma_0; \cdots, \gamma_n):=&
\chi_{G^{(0)}}(\gamma_0\cdots\gamma_n)\int_{Z_{t(\gamma_0)}}h(p)\\
\bigg[\sum_{\gamma\gamma'=\gamma_n}
tr&\big(A_{\gamma'}K(p(\gamma')^{-1}, \gamma, \gamma_{n-1}, \cdots, \gamma_1, p)\big)\chi_{G^{(0)}}(\gamma_0)\\
+\mathlarger{\sum}_{i=1}^{n-1}(-1)^i\sum_{\gamma\gamma'=\gamma_{n-i}}
tr&\big(A_{\gamma_n}K(p\gamma_n^{-1}, \gamma_{n-1}, \cdots, \gamma', \gamma, \cdots, \gamma_1, p)\big)\chi_{G^{(0)}}(\gamma_0)\\
+(-1)^{n}tr&\big(A_{\gamma_n}K(p\gamma_n^{-1},  \gamma_{n-1}, \cdots, \gamma_0, p)\big)\bigg]~\mu_{t(\gamma_0)}(p).
\end{split}\end{equation}
\end{definition}
Note that
\[tr\big(A_{\gamma_n}K(p\gamma_n^{-1},  \gamma_{n-1}, \cdots, \gamma_0, p)\big)\in\wedge^m T^*_{t(\gamma_0)}G^{(0)},\]
because
\[A_{\gamma_n}K(p\gamma_n^{-1}, \gamma_{n-1}, \cdots, \gamma_0, p)\in E_p\otimes E'_p\otimes\wedge^m T^*_{t(\gamma_0)}G^{(0)}.
\]
\begin{proposition}[cf. {\cite[Proposition 3]{gorokhovsky2003local}}]
$
Tr_{<e>}: \Psi_{C_c^\infty(G)}^{-\infty}(\Omega_c^{\bullet}(P\times_tG; E), \Omega_c^{\bullet}(G)\otimes_{C_c^\infty(G)}\Omega_c^{\bullet}(P\times_tG; E))
\longrightarrow\Omega_c^{\bullet}(G)_{\rm{Ab}}
$ is a trace.
\end{proposition}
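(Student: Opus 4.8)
The plan is to show that $Tr_{<e>}$ descends to a \emph{graded} trace, that is, that for homogeneous operators $K_1,K_2$ one has
\[
Tr_{<e>}(K_1\star K_2)=(-1)^{\deg K_1\cdot \deg K_2}\,Tr_{<e>}(K_2\star K_1)
\quad\text{in }\Omega_c^{\bullet}(G)_{\rm{Ab}},
\]
or equivalently that $Tr_{<e>}$ annihilates the graded commutator $K_1\star K_2-(-1)^{\deg K_1\cdot\deg K_2}K_2\star K_1$. Since $\Omega_c^{\bullet}(G)_{\rm{Ab}}$ is the quotient by the \emph{closure} of graded commutators, and both sides depend continuously on the kernels in the pointwise topology, it suffices to establish this identity modulo $[\Omega_c^{\bullet}(G),\Omega_c^{\bullet}(G)]$ itself. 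Here $\deg K_i$ is the total degree built from the form degree and the groupoid degree $k_i$ of $K_i$.

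First I would reduce to a convenient support condition. Using Proposition \ref{2024.04.08} and its remark together with items (iv)--(v) of Proposition \ref{2024.4.8}, I would decompose $K_1$ and $K_2$ into finite sums of operators whose kernels are supported over products of bisections. On such pieces each convolution sum $\sum_{\gamma\gamma'=\gamma_j}$ collapses to the unique factorization of $\gamma_j$ afforded by the bisection, so that the kernel multiplication formula and the definition \eqref{2022.5.15} of $Tr_{<e>}$ reduce to sums of single terms. This strips away the combinatorial sums and exposes the purely algebraic content of the cyclicity.

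Next I would expand both $Tr_{<e>}(K_1\star K_2)$ and $Tr_{<e>}(K_2\star K_1)$ explicitly. Inserting the kernel product into \eqref{2022.5.15}, each side becomes an integral over the fiber $Z_{t(\gamma_0)}$ and an intermediate fiber $Z_{s(\gamma_\ell)}$ of a pointwise trace of a product $A_\bullet K_1\cdot K_2$, weighted by $h$ and cut off by the characteristic functions $\chi_{G^{(0)}}(\gamma_0\cdots\gamma_n)$. The identification of the two expansions rests on three inputs: (a) the cyclicity $tr(ST)=(-1)^{|S||T|}tr(TS)$ of the pointwise trace on $End(E)$-valued forms, where $|S|,|T|$ are form degrees; (b) the covariance relations \eqref{2024.6.17} and \eqref{2022.7.1} of Proposition \ref{2024.6.12}, which let me commute the operators $A_\gamma,B_\gamma$ past the kernels and relabel the groupoid arguments cyclically; and (c) the normalization $\sum_{\gamma\in G^{\pi(p)}}h(p\gamma)=1$ of \eqref{2022.10.4}, which absorbs the cutoff function $h$ produced by the cyclic relabelling. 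The factor $\chi_{G^{(0)}}(\gamma_0\cdots\gamma_n)$ forces $\gamma_0\cdots\gamma_n$ to be a unit, and this is precisely the condition that makes the cyclic permutation of $(\gamma_0,\dots,\gamma_n)$ admissible and identifies the two traces up to the predicted sign.

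The main obstacle I anticipate is the sign and index bookkeeping: one must track the Koszul signs arising when differential-form factors of differing degrees are commuted past one another, the signs $(-1)^{kl}$ built into the kernel product, and the alternating signs in \eqref{2022.5.15}, and then check that after the cyclic relabelling they conspire into the single global sign $(-1)^{\deg K_1\cdot\deg K_2}$. A secondary subtlety is that the change of variable exchanging the intermediate fiber variable with the trace variable must be carried out using the $G$-invariance of the fiberwise densities $\mu_x$ and of the metric on $E$, so that the two fiber integrals genuinely agree; this is where the $G$-equivariance of $\widehat E$ and the invariance relations of Proposition \ref{2024.6.12} do the essential work. Once the explicit cyclic identity is established on the bisection-supported pieces, summing over the finite decomposition and passing to $\Omega_c^{\bullet}(G)_{\rm{Ab}}$ completes the proof.
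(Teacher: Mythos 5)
The paper does not actually prove this proposition: it is imported from Gorokhovsky--Lott (their Proposition 3), so there is no in-text argument to measure your proposal against. Judged on its own terms, your plan --- expand $Tr_{<e>}(K_1\star K_2)$ and $Tr_{<e>}(K_2\star K_1)$ via the kernel product and \eqref{2022.5.15}, exchange the two fiber integrals using the $G$-invariance of the densities $\mu_x$, apply cyclicity of the pointwise trace together with the covariance relations \eqref{2024.6.17} and \eqref{2022.7.1}, and absorb the extra cutoff via \eqref{2022.10.4} --- has the right shape and is consistent with how the paper handles the neighbouring results.

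The gap is at the decisive step. After the cyclic relabelling, the two expansions do \emph{not} coincide as elements of $\Omega_c^{\bullet}(G)$; the cyclic permutation of the arguments $(\gamma_0,\dots,\gamma_n)$ produces a form that differs from the original by something that vanishes only in the quotient $\Omega_c^{\bullet}(G)_{\rm Ab}$. Your sketch asserts that the factor $\chi_{G^{(0)}}(\gamma_0\cdots\gamma_n)$ ``identifies the two traces up to the predicted sign,'' but that factor only guarantees that the permuted tuple is again composable with product a unit; it does not make the permuted form equal to the original one. What is needed --- and what the paper does explicitly in the proofs of the two lemmas of Section \ref{Proof of Theorem} --- is to exhibit the discrepancy as an explicit sum of graded commutators of forms, via the bisection decomposition of Proposition \ref{2024.04.08} and the partial units $1_U$ of Corollary \ref{2024.03.31}, i.e.\ expressions of the shape $\sum_{i,j}[1_i,\omega_{i,j}\star 1_j]$. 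You invoke the bisection decomposition only to collapse the convolution sums, not to manufacture these commutators, so the passage to $\Omega_c^{\bullet}(G)_{\rm Ab}$ is announced but not performed; together with the deferred sign bookkeeping, this is the part that still has to be supplied before the argument is complete.
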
 
 The following theorem generalizes \cite[Lemma 9.15]{berline2003heat} to the \'etale case.
\begin{theorem}\label{2024.03.04}
Let $K\in \Psi_{C_c^\infty(G)}^{-\infty}(\Omega_c^{\bullet}(P\times_tG; E), \Omega_c^{*, k}(G)\otimes\Omega_c^{\bullet}(P\times_tG; E))$. 
Then
$$(d_1+d_2)(Tr_{<e>}(K))=Tr_{<e>}([D_B+\nabla^{0,1}, K]),$$
where we recall that $D_B$ is defined as in Definition \ref{2022.9.20},
$\nabla^{0,1}$ is defined as in 
Remark \ref{2024.2.1}.
\end{theorem}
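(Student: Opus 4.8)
The plan is to exploit the splitting of the superconnection into its summands and match each summand against the appropriate piece of the target differential. Writing $D_B=d_V^{\nabla^E}+L^{E_{\mathfrak{b}}^\bullet}+\iota_\theta$ as in Definition \ref{2022.9.20} and using bilinearity of the graded commutator, I would reduce Theorem \ref{2024.03.04} to the four identities
\[Tr_{<e>}([d_V^{\nabla^E},K])=0,\qquad Tr_{<e>}([\iota_\theta,K])=0,\]
\[Tr_{<e>}([L^{E_{\mathfrak{b}}^\bullet},K])=d_1\,Tr_{<e>}(K),\qquad Tr_{<e>}([\nabla^{0,1},K])=d_2\,Tr_{<e>}(K),\]
whose sum is the assertion. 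The guiding principle is that the two fiberwise terms of $D_B$ contribute nothing, the horizontal connection $L^{E_{\mathfrak{b}}^\bullet}$ reproduces the de Rham differential $d_1$ on the base, and the noncommutative connection $\nabla^{0,1}$ reproduces the groupoid differential $d_2$; the bidegree count ($L^{E_{\mathfrak{b}}^\bullet}$ and $d_1$ both raise the form degree $m$, while $\nabla^{0,1}$ and $d_2$ both raise the groupoid degree $n$) makes this matching the only consistent one.

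For the two fiberwise terms I expect the verification to be short. The curvature contraction $\iota_\theta$ is a bundle endomorphism of groupoid degree zero, so $[\iota_\theta,K]$ is again smoothing of the same groupoid degree and its trace vanishes by the graded trace property of $Tr_{<e>}$ (the Proposition preceding the statement, cf. \cite[Proposition 3]{gorokhovsky2003local}). The vertical derivative $d_V^{\nabla^E}$ is a first-order fiberwise operator; evaluating $Tr_{<e>}([d_V^{\nabla^E},K])$ via \eqref{2022.5.15} places its action on the diagonal of the kernel, and integration by parts along the fibers $Z_{t(\gamma_0)}$ --- legitimate because the kernels have compact support and the densities $\mu_x$ are $G$-invariant --- cancels the two terms of the commutator. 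For the horizontal term I would insert the explicit trace formula \eqref{2022.5.15} and differentiate under the fiber integral: since $L^{E_{\mathfrak{b}}^\bullet}$ is the metric- and $G$-invariant connection compatible with the family $\{\mu_x\}$, the Leibniz rule converts $d_1\,Tr_{<e>}(K)$ into precisely the fiberwise pairing with $[L^{E_{\mathfrak{b}}^\bullet},K]$. This is the \'etale analogue of the computation behind \cite[Lemma 9.15]{berline2003heat}, and I expect it to be routine once the ``differentiation under the integral'' identity for the $G$-invariant superconnection is recorded.

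The substantial work, and the main obstacle, is the identity $Tr_{<e>}([\nabla^{0,1},K])=d_2\,Tr_{<e>}(K)$. Here I would expand the left-hand side using the extended action of $\nabla^{0,1}$ from Remark \ref{2024.2.1}, the operator formula \eqref{2024.5.15}, and the kernel-composition rule, and expand the right-hand side from $(d_2\omega)(\gamma_0;\gamma_1,\dots,\gamma_n)=\chi_{G^{(0)}}(\gamma_0)\,\omega(\gamma_1;\dots,\gamma_n)$. Matching the two forces a reorganisation of the decomposition sums $\sum_{\gamma\gamma'=\gamma_i}$ against the cutoffs $\chi_{G^{(0)}}$ and the signs $(-1)^i$; the ``extra'' boundary terms must collapse, and I expect this to be forced by the partition-of-unity relation \eqref{2022.10.4} together with the necessary conditions \eqref{2024.6.17} and \eqref{2022.7.1} of Proposition \ref{2024.6.12}, exactly as in the proof that $[\nabla^{0,1},K]$ is $\Omega_c^\bullet(G)$-linear. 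To keep the bookkeeping manageable and to run the computation parallel to the crossed-product arguments of \cite[Lemma 3.19]{so2017non} and \cite[Lemma 14]{LeichtnamPiazza+2005+169+233}, I would first apply the bisection decomposition of Proposition \ref{2024.04.08} to reduce to coefficients supported on $U\hat\times V$, where by Proposition \ref{2024.4.8}(v) each convolution admits a unique decomposition and every sum $\sum_{\gamma\gamma'=\gamma_i}$ collapses to a single term. What remains is then purely combinatorial --- controlling signs and the cyclic reindexing that exhibits the outcome as $d_2$ of the trace, and checking throughout that every manipulation descends to the abelianisation $\Omega_c^\bullet(G)_{\rm{Ab}}$.
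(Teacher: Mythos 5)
Your overall skeleton --- splitting the claim into a $d_1$-identity for $D_B$ and a $d_2$-identity for $\nabla^{0,1}$, and invoking the bisection decomposition together with the necessary conditions \eqref{2024.6.17} and \eqref{2022.7.1} --- matches the paper's proof. But there is a genuine gap in the $d_1$ part. You assert that for the horizontal term the Leibniz rule ``converts $d_1\,Tr_{<e>}(K)$ into precisely the fiberwise pairing with $[L^{E_{\mathfrak{b}}^\bullet},K]$.'' It does not: the trace formula \eqref{2022.5.15} contains the cutoff function $h$, so differentiating under the fiber integral produces an extra term
\[-\chi_{G^{(0)}}(\gamma_0\gamma_1)\int_{Z_{t(\gamma_0)}}d_1h(p)\,tr\big(A_{\gamma_0^{-1}}K(p\gamma_0,\gamma_0,p)\big)\,\mu_{t(\gamma_0)}(p),\]
which is absent from the classical \cite[Lemma 9.15]{berline2003heat} (where there is no cutoff) and which does \emph{not} vanish pointwise. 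The heart of the paper's argument, and the reason the bisection machinery is introduced at all, is to show that this leftover term equals a sum of graded commutators $\sum_{i,j}[1_i,\omega_{i,j}\star 1_j]$ --- built from the bisection decomposition of Proposition \ref{2024.04.08} and the auxiliary functions $1_U$ of Corollary \ref{2024.03.31} --- and hence is zero \emph{in the quotient} $\Omega_c^{\bullet}(G)_{\rm Ab}$, using \eqref{2022.7.1}, the $\gamma$-independence coming from \eqref{2024.6.17}, and $\sum_{\gamma}d_1h(p\gamma)=0$. Your proposal never produces or disposes of this term.

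The same issue recurs, in sharper form, in the $d_2$ part. After expanding $Tr_{<e>}([\nabla^{0,1},K])$ one gets six terms; only one of them is $d_2\,Tr_{<e>}(K)$, two vanish by \eqref{2022.7.1}, and the remaining three do \emph{not} ``collapse'' via the partition-of-unity relation as you expect --- the paper shows their sum equals $\sum_{i,j}\bigl([d_21_i,\mu_{i,j}\star 1_j]+[1_i,d_2\mu_{i,j}\star 1_j]\bigr)$, again a sum of graded commutators that dies only in $\Omega_c^{\bullet}(G)_{\rm Ab}$. Your framing (``checking that every manipulation descends to the abelianisation'') has the logic backwards: the identity is simply false at the level of $\Omega_c^{\bullet}(G)$, and passing to the abelianisation is not a formality to be checked but the mechanism that kills the obstructing terms. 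You also use the bisection decomposition only to simplify convolution sums, whereas its essential role here is to manufacture the commutators $[1_i,\cdot]$. Your finer splitting of $D_B$ into $d_V^{\nabla^E}$, $L^{E_{\mathfrak{b}}^\bullet}$, $\iota_\theta$ is harmless but not where the difficulty lies.
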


\subsection{Chern character and main theorem}
We extend $Tr_{<e>}$ to the supertrace $sTr_{<e>}$ by using the induced $\bZ_2$-grading on $E^{\bullet}$ (cf. \cite[Subsection 1.3]{berline2003heat}).
Since we do not consider the $t\rightarrow 0$ and $t\rightarrow\infty$ behaviors of the heat kernel in this paper, we restrict our attention to the case $t=1$, the other cases are similar.

The heat kernel of $D(u)^2$, denoted by $e^{-D(u)^2}$, is constructed using a generalization of the Volterra series (cf. \cite[Appendix in Chapter 9]{berline2003heat}, \cite[(6.42)]{gorokhovsky2003local}).
Following \cite[Definition 9.16]{berline2003heat}, we have
\begin{definition}\label{2024.3.4}
The noncommutative Chern form is defined by
\[\begin{split}\operatorname{Ch}(D(u)):=&sTr_{<e>}(e^{-D(u)^2})\in \Omega_c^{\bullet}(G)_{\rm{Ab}}.\\
\end{split}
\]
\end{definition}
We thus arrive at the main result of this paper.
\begin{theorem}
The noncommutative differential form $\operatorname{Ch}(D(u))$ defined in Definition \ref{2024.3.4} is closed.
\end{theorem}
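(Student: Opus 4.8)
The plan is to reduce the closedness of $\operatorname{Ch}(D(u)) = sTr_{<e>}(e^{-D(u)^2})$ to the trace identity of Theorem \ref{2024.03.04}, applied to the heat operator $K = e^{-D(u)^2}$, after which the statement follows from the vanishing of a supercommutator. First I would upgrade Theorem \ref{2024.03.04} in two routine ways. Since the $\mathbb{Z}_2$-grading on $E^\bullet$ induced by $\wedge^\bullet V_P'$ is respected by $D_B$ and $\nabla^{0,1}$ (these being odd in the superconnection sense), the term-by-term argument proving Theorem \ref{2024.03.04} carries over with $Tr_{<e>}$ replaced by the supertrace $sTr_{<e>}$, giving $(d_1+d_2)(sTr_{<e>}(K)) = sTr_{<e>}([D_B+\nabla^{0,1}, K])$. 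I would then establish the same identity with $D_B$ replaced by its adjoint $D_B'$. This requires nothing new: the proof of Theorem \ref{2024.03.04} uses only that $D_B$ splits into two fiberwise operators (commuting with functions on the base) and one connection term $L^{E_{\mathfrak{b}}^\bullet}$, together with the connection property of $\nabla^{0,1}$ from Remark \ref{2024.2.1}; since $D_B' = (d_V^{\nabla^E})^* + (L^{E_{\mathfrak{b}}^\bullet})' - \theta\wedge$ has the identical structure (two fiberwise terms, one connection), the same computation yields $(d_1+d_2)(sTr_{<e>}(K)) = sTr_{<e>}([D_B'+\nabla^{0,1}, K])$.

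Having both the $D = D_B + \nabla^{0,1}$ and $D' = D_B' + \nabla^{0,1}$ versions, I would take the convex combination with weights $u$ and $1-u$. Because $[\,\cdot\,, K]$ is linear in its first slot and $D(u) = uD + (1-u)D'$, the two right-hand sides combine to $sTr_{<e>}([D(u), K])$, while the two coinciding left-hand sides combine to $(d_1+d_2)(sTr_{<e>}(K))$. This produces the master identity
$$(d_1+d_2)(sTr_{<e>}(K)) = sTr_{<e>}([D(u), K]),$$
valid for each $u \in [0,1]$ and each admissible $\Omega_c^\bullet(G)$-linear smoothing operator $K$. Equivalently, one may note that $D(u) = uD_B + (1-u)D_B' + \nabla^{0,1}$ already has the fiberwise-plus-connection structure that the proof of Theorem \ref{2024.03.04} exploits, and rerun that proof directly for $D(u)$.

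Finally I would apply the master identity to $K = e^{-D(u)^2}$. The heat operator is an $\Omega_c^\bullet(G)$-linear smoothing operator, constructed via the Volterra/Duhamel expansion indicated before Definition \ref{2024.3.4}, hence lies in the class to which the identity applies. Since $D(u)$ is odd and $D(u)^2$ is even, one has $[D(u), D(u)^2] = D(u)^3 - D(u)^3 = 0$, so expanding $e^{-D(u)^2}$ as a power series in $D(u)^2$ gives $[D(u), e^{-D(u)^2}] = 0$. Therefore $(d_1+d_2)\operatorname{Ch}(D(u)) = sTr_{<e>}([D(u), e^{-D(u)^2}]) = 0$, which is the desired closedness. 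I expect the main obstacle to be the first step: verifying rigorously that $e^{-D(u)^2}$ belongs to the admissible class of smoothing operators and that the series manipulations---the vanishing of the supercommutator and the interchange of $sTr_{<e>}$ with the Volterra expansion---are legitimate. Everything after that is a formal consequence of the graded-trace property packaged in Theorem \ref{2024.03.04}.
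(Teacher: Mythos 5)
Your argument is correct, and its core coincides with the paper's: both reduce closedness to the supertrace version of Theorem \ref{2024.03.04} applied to $K=e^{-D(u)^2}$, combined with the Bianchi-type identity $[D(u),e^{-D(u)^2}]=0$. Where you diverge is in how the mismatch between $D_B+\nabla^{0,1}$ and $D(u)$ is disposed of. The paper applies Theorem \ref{2024.03.04} only for $D=D_B+\nabla^{0,1}$, splits $D_B+\nabla^{0,1}=\bigl(D_B+\nabla^{0,1}-D(u)\bigr)+D(u)$, notes that the leftover $(1-u)(D_B-D_B')$ is an $\Omega_c^{\bullet}(G)$-linear operator (a difference of superconnections, hence zeroth order and fiberwise), and kills $sTr_{<e>}\bigl([(1-u)(D_B-D_B'),e^{-D(u)^2}]\bigr)$ by the graded trace property of $sTr_{<e>}$, which is already available as a Proposition. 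You instead propose to reprove the transgression identity for the adjoint superconnection $D'=D_B'+\nabla^{0,1}$ (or directly for $D(u)$) and take the convex combination with weights $u$ and $1-u$, reaching $(d_1+d_2)\,sTr_{<e>}(K)=sTr_{<e>}([D(u),K])$ without ever invoking the trace property. The two routes are essentially equivalent in content --- subtracting your two identities yields precisely $sTr_{<e>}([D_B-D_B',K])=0$, the instance of the trace property the paper uses --- but the paper's is the more economical: the delicate kernel computation of Section \ref{Proof of Theorem} is run only once, for $D_B$, whereas your route requires checking (routinely, since $D_B'$ has the same fiberwise-plus-connection structure and \cite[Lemma 9.15]{berline2003heat} holds for arbitrary superconnections) that the whole argument goes through for $(L^{E_{\mathfrak{b}}^\bullet})'$. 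Your closing caveats about $e^{-D(u)^2}$ belonging to the admissible smoothing class and the legitimacy of the Volterra-series manipulations are real but are likewise left implicit in the paper.
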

\begin{proof}
Using Theorem \ref{2024.03.04}, we have
\[\begin{split}
(d_1+d_2)(sTr_{<e>}(e^{-D(u)^2}))
&=sTr_{<e>}([D_B+\nabla^{0,1}, e^{-D(u)^2}])\\
&=sTr_{<e>}([D_B+\nabla^{0,1}-D(u), e^{-D(u)^2}])+sTr_{<e>}([D(u), e^{-D(u)^2}])\\
&=sTr_{<e>}([(1-u)(D_B-D_B'), e^{-D(u)^2}])\\
&=0.
\end{split}\]
The last line is obtained since 
the difference of the two superconnections is an $\Omega_c^{\bullet}(G)$-linear operator.
This proves that $\operatorname{Ch}(D(u))$ is closed.
\end{proof}
As an even form, $\operatorname{Ch}(D(u))$ defines a
cohomology class in $H^{even}(\Omega_c^{\bullet}(G)_{\rm{Ab}})$.

\section{Proof of Theorem \ref{2024.03.04}}\label{Proof of Theorem}

The goal of this section is to prove one of the main results of the paper, Theorem \ref{2024.03.04}, whose proof proceeds along the lines of \cite[Lemma 14]{LeichtnamPiazza+2005+169+233}
and \cite[Lemma 3.19]{so2017non}.

We recall that Theorem \ref{2024.03.04} states 
$$(d_1+d_2)(Tr_{<e>}(K))=Tr_{<e>}([D_B+\nabla^{0,1}, K]).$$
We need to prove 
\begin{equation}d_1(Tr_{<e>}(K))=Tr_{<e>}([D_B, K])
\end{equation}
and 
\begin{equation}\label{2024.5.20}
d_2(Tr_{<e>}(K))=Tr_{<e>}([\nabla^{0,1}, K]).
\end{equation}
In both cases, we assume $k=1$, the other cases are proved similarly.
\begin{lemma}
We have
$$d_1(Tr_{<e>}(K))=Tr_{<e>}([D_B, K]).$$
\end{lemma}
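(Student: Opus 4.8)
The plan is to prove the identity $d_1(Tr_{<e>}(K)) = Tr_{<e>}([D_B, K])$ by a direct computation, evaluating both sides on an arbitrary tuple $(\gamma_0; \gamma_1, \dots, \gamma_n)$ and comparing. Recall that $D_B = d_V^{\nabla^E} + L^{E_{\mathfrak b}^\bullet} + \iota_\theta$, where the first and third terms are fiberwise operators commuting with functions on the base $G^{(0)}$, while the middle term $L^{E_{\mathfrak b}^\bullet}$ is a genuine connection in the horizontal (base) direction. Since $d_1$ is the de Rham differential on $\Omega_c^{*,*}(G)$, which differentiates in the base variables, I expect the fiberwise terms $d_V^{\nabla^E}$ and $\iota_\theta$ to contribute nothing to the discrepancy: their graded commutators with $K$ land inside the fiberwise integral $\int_{Z_{t(\gamma_0)}}$ defining $Tr_{<e>}$, and being fiberwise they commute with $d_1$, so by the trace property $Tr_{<e>}([d_V^{\nabla^E} + \iota_\theta, K]) = 0$. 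Thus the entire content of the lemma reduces to the connection term, and I must show
\begin{equation*}
d_1(Tr_{<e>}(K)) = Tr_{<e>}([L^{E_{\mathfrak b}^\bullet}, K]).
\end{equation*}

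First I would specialize to $k=1$ as the lemma permits, and write out $Tr_{<e>}(K)(\gamma_0; \gamma_1, \dots, \gamma_n)$ explicitly using formula \eqref{2022.5.15}, which involves the cutoff $h(p)$, the pointwise trace of various $A_{\gamma}$-twisted evaluations of the kernel, and the fiberwise integral against $\mu_{t(\gamma_0)}$. Applying $d_1$ means differentiating this expression in the base coordinate $t(\gamma_0) \in G^{(0)}$. The key analytic input is that the Riemannian density $\mu_x$ and the connection $L^{E_{\mathfrak b}^\bullet}$ are both $G$-invariant, and that $L^{E_{\mathfrak b}^\bullet}$ is precisely the connection controlling how fiberwise data varies along the base. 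I would use the Leibniz rule to pass $d_1$ through the integral sign and the pointwise trace; differentiating under the integral produces a term where $d_1$ hits $h(p)$, a term where it hits the kernel $K$ (yielding the covariant derivative along the base, i.e.\ the action of $L^{E_{\mathfrak b}^\bullet}$), and terms arising from how the twists $A_{\gamma}$ and the integration domain depend on the base point.

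The main obstacle will be the careful bookkeeping of signs and of the $A_\gamma$ twistings: the trace formula \eqref{2022.5.15} contains an alternating sum over decompositions $\gamma\gamma' = \gamma_{n-i}$, and the graded commutator $[L^{E_{\mathfrak b}^\bullet}, K]$ must reproduce exactly this structure after the fiberwise integration is performed. Concretely, I expect that the term in $d_1(Tr_{<e>}(K))$ coming from differentiating $h$ must be shown to vanish or to cancel against a boundary contribution, using the normalization $\sum_{\gamma \in G^{\pi(p)}} h(p\gamma) = 1$ from \eqref{2022.10.4}; differentiating this identity gives $\sum_\gamma (d_1 h)(p\gamma) = 0$, which is the crucial relation that kills the spurious $d_1 h$ terms. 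The remaining term, where $d_1$ acts as the covariant derivative on the kernel, should match $Tr_{<e>}(L^{E_{\mathfrak b}^\bullet} K \pm K L^{E_{\mathfrak b}^\bullet})$ once the $G$-invariance of $L^{E_{\mathfrak b}^\bullet}$ is used to move the connection past the $A_\gamma$ twists.

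I would organize the verification so that the fiberwise-operator vanishing ($Tr_{<e>}([d_V^{\nabla^E} + \iota_\theta, K]) = 0$) is dispatched first via the trace property, then reduce to the connection term, and finally carry out the differentiation-under-the-integral computation term by term, invoking \eqref{2022.10.4} at the precise point where the $d_1 h$ contributions appear. The analogous crossed-product computations in \cite[Lemma 14]{LeichtnamPiazza+2005+169+233} and \cite[Lemma 3.19]{so2017non} provide the template for the sign conventions, and I would follow their decomposition-into-bisections strategy, using Proposition \ref{2024.04.08} to write any form on $G^{(2)}$ as a finite sum of pieces supported on products of bisections $U_i \hat\times V_i$, which makes the convolution sums collapse to single terms and renders the whole calculation explicit.
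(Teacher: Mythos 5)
Your overall outline agrees with the paper's: both reduce, via \cite[Lemma 9.15]{berline2003heat}, the identity to the claim that the only discrepancy between $d_1(Tr_{<e>}(K))$ and $Tr_{<e>}([D_B,K])$ is the term produced when $d_1$ hits the cutoff, namely
\[
-\chi_{G^{(0)}}(\gamma_0\gamma_1)\int_{Z_{t(\gamma_0)}}d_1h(p)\,tr\big(A_{\gamma_0^{-1}}K(p\gamma_0,\gamma_0,p)\big)\,\mu_{t(\gamma_0)}(p).
\]
The gap is in how you propose to dispose of this term. You assert that differentiating the normalization \eqref{2022.10.4} to obtain $\sum_{\gamma}(d_1h)(p\gamma)=0$ is ``the crucial relation that kills the spurious $d_1h$ terms.'' By itself it cannot: there is no sum over $\gamma$ in the displayed expression to which that identity could be applied, and the integrand is not constant along $G$-orbits (the equivariance \eqref{2024.6.17} shifts the middle argument of the kernel, not just the endpoints). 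The term is in general nonzero as an element of $\Omega_c^{*,1}(G)$; it vanishes only in the quotient $\Omega_c^{\bullet}(G)_{\rm{Ab}}$, and the whole point of the lemma is that the identity holds modulo graded commutators. Your proposal never acknowledges that the equality is one in the abelianization.

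The mechanism the paper uses, and which your argument is missing, is the following. Set $\omega(a;b)=\int_{Z_{t(a)}}h(q)\,d_1h(qa)\,tr\big(A_{b^{-1}}K(qab,b,qa)\big)\,\mu_{t(a)}(q)$, decompose $\omega=\sum_{i,j}\omega_{i,j}$ over bisections $U_i\hat\times V_j$ by Proposition \ref{2024.04.08}, and use the cutoffs $1_i,1_j$ of Corollary \ref{2024.03.31} to exhibit the obstruction term as the sum of graded commutators $\sum_{i,j}[\,1_i,\,\omega_{i,j}\star 1_j\,]$, which is zero in $\Omega_c^{\bullet}(G)_{\rm{Ab}}$ by definition of the quotient. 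The relations $\sum_{\gamma}(d_1h)(p\gamma)=0$, \eqref{2022.7.1} and \eqref{2024.6.17} enter only at the final stage, to show that the reversed product $\sum_{i,j}\omega_{i,j}\star 1_j\star 1_i$ vanishes identically, so that the commutator reproduces the obstruction term exactly. You do invoke the bisection decomposition, but only as a bookkeeping device ``to make the convolution sums collapse''; its essential role here is to manufacture the commutator. Without this step the proof does not close.
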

\begin{proof}
By \cite[Lemma 9.15]{berline2003heat},
\[\begin{split}
d_1(Tr_{<e>}(K))(\gamma_0; \gamma_1)=&
-\chi_{G^{(0)}}(\gamma_0\gamma_1)\int_{Z_{t(\gamma_0)}}d_1h(p)tr\big(A_{\gamma_0^{-1}}K(p\gamma_0, \gamma_0, p)\big)
~\mu_{t(\gamma_0)}(p)\\
&-\chi_{G^{(0)}}(\gamma_0\gamma_1)\int_{Z_{t(\gamma_0)}}h(p)tr\big(A_{\gamma_0^{-1}}[D_B, K](p\gamma_0, \gamma_0, p)\big)
~\mu_{t(\gamma_0)}(p)\\
\end{split}\]
and the first term above is equal to zero. In fact, by \eqref{2022.10.4} and $\sum_{\gamma\gamma'=\gamma_0}=\sum_{\gamma\in G^{t(\gamma_0)}}$, we have
\[\begin{split}
\Big(&d_1(Tr_{<e>}(K))-Tr_{<e>}([D_B, K])\Big)(\gamma_0; \gamma_1)\\
=&
-\chi_{G^{(0)}}(\gamma_0\gamma_1)\int_{Z_{t(\gamma_0)}}d_1h(p)tr\big(A_{\gamma_0^{-1}}K(p\gamma_0, \gamma_0, p)\big)
~\mu_{t(\gamma_0)}(p)\\
=&-\chi_{G^{(0)}}(\gamma_0\gamma_1)\int_{Z_{t(\gamma_0)}}\sum_{\gamma\gamma'=\gamma_0}h(p\gamma)d_1h(p)tr\big(A_{\gamma_0^{-1}}K(p\gamma_0, \gamma_0, p)\big)
~\mu_{t(\gamma_0)}(p)\\
=&-\chi_{G^{(0)}}(\gamma_0\gamma_1)
\sum_{\gamma\gamma'=\gamma_0}\int_{Z_{s(\gamma)}}h(q)d_1h(q\gamma^{-1})
tr\big(A_{\gamma_1}K(q\gamma^{-1}\gamma_1^{-1}, \gamma_1^{-1}, q\gamma^{-1})\big)
~\mu_{s(\gamma)}(q).\\
\end{split}\]
If we let $$\omega(a; b)=\int_{Z_{t(a)}}h(q)d_1h(qa)
tr\big(A_{b^{-1}}K(qab, b, qa)\big)
~\mu_{t(a)}(q),$$ then we observe that $\omega\in\Omega_c^{*, 1}(G)$ and can wirte $\omega=\sum\limits_{i, j}\omega_{i, j}$ with $supp(\omega_{i, j})\subseteq U_i\hat\times V_j$ by Proposition \ref{2024.04.08}. Hence, according to Corollary \ref{2024.03.31}, there exist 
$1_i, 1_j\in C_c^{\infty}(G)$ with $supp(1_i)\subseteq U_i^{-1}$ and $supp(1_j)\subseteq V_j^{-1}$.

We claim that 
\[\begin{split}
&\sum_{i, j}\big[1_{i}, \omega_{i, j}\star 1_{j}\big](\gamma_0; \gamma_1)\\
=&-\chi_{G^{(0)}}(\gamma_0\gamma_1)
\sum_{\gamma\gamma'=\gamma_0}\int_{Z_{s(\gamma)}}h(q)d_1h(q\gamma^{-1})
tr\big(A_{\gamma_1}K(q\gamma^{-1}\gamma_1^{-1}, \gamma_1^{-1}, q\gamma^{-1})\big)
~\mu_{s(\gamma)}(q)\\
=&-\chi_{G^{(0)}}(\gamma_0\gamma_1)\sum_{\gamma\gamma'=\gamma_0}
\omega(\gamma^{-1}; \gamma_0).
\end{split}\]
We have
\begin{equation}\label{2024.1.13}
\sum_{i, j}\big[1_{i}, \omega_{i, j}\star 1_{j}\big](\gamma_0; \gamma_1)
=\sum_{i, j}\big(1_{i}\star \omega_{i, j}\star 1_{j}-\omega_{i, j}\star 1_{j}\star1_{i}\big)(\gamma_0; \gamma_1).
\end{equation}
The first term of the above equation is equal to 
\begin{equation}\begin{split}
&\sum_{i, j}(1_{i}\star \omega_{i, j}\star 1_{j})(\gamma_0; \gamma_1)\\
=&\sum_{i, j}\sum_{\gamma\gamma'=\gamma_0}
1_{i}(\gamma)\bigg(\sum_{ab=\gamma_1}\omega_{i, j}(\gamma'; a)(1_{j})(b)-\sum_{ab=\gamma'}\omega_{i, j}(a; b)(1_{j})(\gamma_1)\bigg)\\
=&-\sum_{i, j}\sum_{\gamma\gamma'=\gamma_0}
\omega_{i, j}(\gamma^{-1}; \gamma_0)1_{j}(\gamma_1)\\
=&-\sum_{i, j}\sum_{\gamma\gamma'=\gamma_0}
\omega_{i, j}(\gamma^{-1}; \gamma_0)\chi_{G^{(0)}}(\gamma_0\gamma_1)\\
=&-\chi_{G^{(0)}}(\gamma_0\gamma_1)\sum_{\gamma\gamma'=\gamma_0}
\omega(\gamma^{-1}; \gamma_0).
\end{split}\end{equation}
Similarly, the second term of \eqref{2024.1.13} is equal to
\begin{equation}\label{2023.02.24}
\begin{split}
&\sum_{i, j}(\omega_{i, j}\star 1_{j}\star1_{i})(\gamma_0; \gamma_1)\\
=&-\sum_{i, j}\sum_{\gamma\gamma'=\gamma_0}
(\omega_{i, j}\star 1_j)(\gamma; \gamma')1_i(\gamma_1)+\sum_{i, j}\sum_{\gamma\gamma'=\gamma_1}
(\omega_{i, j}\star 1_j)(\gamma_0; \gamma)1_i(\gamma')\\
=
&-\sum_{i, j}\sum_{\gamma\gamma'=\gamma_0}
\bigg(\sum_{ab=\gamma'}\omega_{i, j}(\gamma; a)1_j(b)1_i(\gamma_1)-\sum_{ab=\gamma}\omega_{i, j}(a; b)1_j(\gamma')1_i(\gamma_1)\bigg)\\
&+\sum_{i, j}\sum_{\gamma\gamma'=\gamma_1}
\bigg(\sum_{ab=\gamma}\omega_{i, j}(\gamma_0; a)1_j(b)1_i(\gamma')-\sum_{ab=\gamma_0}\omega_{i, j}(a; b)1_j(\gamma)1_i(\gamma')\bigg)\\
=&-
\bigg(\sum_{t(a)=s(\gamma_0)}\omega(\gamma_0; a)\chi_{G^{(0)}}(\gamma_0\gamma_1)-\sum_{t(\gamma'^{-1})=s(\gamma_0)}\omega(\gamma_0; \gamma'^{-1})\chi_{G^{(0)}}(\gamma_0\gamma_1)\bigg)\\
&+\sum_{t(\gamma)=s(\gamma_0)}
\bigg(\omega(\gamma_0; \gamma)\chi_{G^{(0)}}(\gamma_0\gamma_1)-\omega(\gamma_0\gamma; \gamma^{-1})\chi_{G^{(0)}}(\gamma_0\gamma_1)\bigg).
\end{split}
\end{equation}
By using \eqref{2022.7.1}, for all $\gamma_0$, $$
\sum_{t(a)=s(\gamma_0)}\omega(\gamma_0; a)=
\sum_{t(\gamma'^{-1})=s(\gamma_0)}\omega(\gamma_0; \gamma'^{-1})=
\sum_{t(\gamma)=s(\gamma_0)}
\omega(\gamma_0; \gamma)=0.$$
According to \eqref{2024.6.17}, the integrand $tr(A_\gamma K(q\gamma_0, \gamma^{-1}, q\gamma_0\gamma))$ in $\omega(\gamma_0\gamma; \gamma^{-1})$ is independent of $\gamma$. Because $\sum\limits_{\gamma\in G^{\pi(p)}}d_1h(p\gamma)=0
$, we finally get that \eqref{2023.02.24} vanishes.
The claim is proved.
\end{proof}
Next, we prove the second identity \eqref{2024.5.20}.
\begin{lemma}We have
$$d_2(Tr_{<e>}(K))=Tr_{<e>}([\nabla^{0,1}, K]).$$
\end{lemma}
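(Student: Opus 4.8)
The plan is to verify the identity at degree $k=1$ by computing both sides directly and matching them on a triple $(\gamma_0; \gamma_1, \gamma_2)$, since both $d_2(Tr_{<e>}(K))$ and $Tr_{<e>}([\nabla^{0,1}, K])$ live in $\Omega_c^{*,2}(G)$. For the left-hand side, the definition of $d_2$ gives $d_2(Tr_{<e>}(K))(\gamma_0; \gamma_1, \gamma_2) = \chi_{G^{(0)}}(\gamma_0)\, Tr_{<e>}(K)(\gamma_1; \gamma_2)$, into which I substitute the $n=1$ instance of the trace formula \eqref{2022.5.15}. The summand carrying the factor $\chi_{G^{(0)}}(\gamma_1)$ is supported where $\gamma_1$ is a unit and therefore vanishes as an element of the quotient $\Omega_c^{*,2}(G)$, leaving the single term
$$-\chi_{G^{(0)}}(\gamma_0)\chi_{G^{(0)}}(\gamma_1\gamma_2)\int_{Z_{t(\gamma_1)}}h(p)\, tr\big(A_{\gamma_2}K(p\gamma_2^{-1}, \gamma_1, p)\big)\,\mu_{t(\gamma_1)}(p),$$
which is the expression the right-hand side must reproduce.

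For the right-hand side I first expand the graded commutator $[\nabla^{0,1}, K] = \nabla^{0,1}K \pm K\nabla^{0,1}$, reading off the kernels of $\nabla^{0,1}K$ and $K\nabla^{0,1}$ from the formula for $\nabla^{0,1}$ in Remark \ref{2024.2.1} together with the kernel-composition rule. Substituting these into the $n=2$ instance of \eqref{2022.5.15} yields several contributions indexed by the decompositions $\gamma\gamma'=\gamma_i$ occurring in the trace formula, each weighted by a value $h(p\gamma^{\pm1})$ of $h$ at a shifted base point. The bulk of the labor is purely organizational: keeping track of the signs, of the operators $A_\gamma$ and $B_\gamma$, and of the translations $p \mapsto p\gamma^{\pm1}$ that the composition rule introduces.

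The collapse of this multi-term sum to the single target term is where the structural input enters. I use relation \eqref{2024.6.17} of Proposition \ref{2024.6.12} to identify kernels that differ by a $B_\gamma$-action, thereby rendering an integrand independent of its summation variable, and relation \eqref{2022.7.1} to annihilate the sums $\sum_{\gamma\gamma'=\gamma_2} A_{\gamma^{-1}} K(p\gamma, \gamma, q)$. The normalization $\sum_{\gamma\in G^{\pi(p)}} h(p\gamma) = 1$ of \eqref{2022.10.4} then resums the remaining $h$-weighted terms, playing exactly the role that $\sum_\gamma d_1 h(p\gamma) = 0$ played in the previous lemma. Should residual terms fail to cancel pointwise, I expect to invoke the bisection decomposition of Proposition \ref{2024.04.08} and the characteristic-function device of Corollary \ref{2024.03.31} to rewrite them as graded commutators $\sum_{i,j}[1_i, \omega_{i,j}\star 1_j]$ in $\Omega_c^\bullet(G)$, which then vanish upon passing to $\Omega_c^\bullet(G)_{\mathrm{Ab}}$.

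The main obstacle I anticipate is precisely this matching step: correctly tracking the base-point translations $p\mapsto p\gamma^{\pm1}$ against the $G$-invariance of both $h$ and the kernel, and arranging the several $\nabla^{0,1}K$ and $K\nabla^{0,1}$ contributions so that \eqref{2024.6.17} and \eqref{2022.7.1} become applicable and the surviving $h$-dependence resums via \eqref{2022.10.4}. Once the bookkeeping is organized so that every term other than one either cancels in pairs or assembles into an abelianized commutator, the remaining term coincides with the left-hand side computed above, which completes the proof of \eqref{2024.5.20} and hence, together with the preceding lemma, of Theorem \ref{2024.03.04}.
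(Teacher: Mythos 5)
Your plan follows the same route as the paper: compute the left side from the $n=1$ trace formula (correctly discarding the summand carrying $\chi_{G^{(0)}}(\gamma_1)$ in the quotient), compute the kernel of $[\nabla^{0,1},K]$, feed it into the $n=2$ trace formula, and reduce the resulting six terms using \eqref{2024.6.17}, \eqref{2022.7.1}, \eqref{2022.10.4} and the abelianization. The identification of the target term and the roles you assign to \eqref{2022.7.1} (killing two of the six terms) and to \eqref{2022.10.4} (collapsing $\sum_{\gamma\gamma'=\gamma_2}h(p\gamma'^{-1})=1$ in the term that becomes $d_2(Tr_{<e>}(K))$) are exactly what the paper does.

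There are, however, two genuine gaps. First, the decisive step --- showing that the three surviving terms assemble into something that dies in $\Omega_c^{\bullet}(G)_{\rm Ab}$ --- is only announced, and the commutator you propose for it, $\sum_{i,j}[1_i,\omega_{i,j}\star 1_j]$, cannot work here on degree grounds: with $\omega_{i,j}\in\Omega_c^{*,1}(G)$ and $1_i,1_j\in C_c^{\infty}(G)$ that commutator lies in $\Omega_c^{*,1}(G)$, whereas the residual terms are evaluated on $(\gamma_0;\gamma_1,\gamma_2)$ and live in $\Omega_c^{*,2}(G)$. The paper instead uses the commutators $[d_21_i,\mu_{i,j}\star 1_j]+[1_i,d_2\mu_{i,j}\star 1_j]$ built from the specific form $\mu(a;b)=\int_{Z_{t(a)}}h(p)h(pa)\,tr\big(A_{b^{-1}}K(pab,b,pa)\big)\mu_{t(a)}(p)$; the insertion of $d_2$ is what raises the second degree by one and makes the bookkeeping close. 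Identifying this $\mu$ and these commutators is the substantive content of the lemma, not organizational labor. Second, your statement that \eqref{2022.10.4} ``resums the remaining $h$-weighted terms'' misdescribes the mechanism: the normalization of $h$ only enters in matching the first term with $d_2(Tr_{<e>}(K))$; the remaining $h(p)h(p\gamma)$-weighted terms are not resummed but are precisely the ones absorbed into the abelianized commutators, with \eqref{2024.6.17} used to rewrite $A_{\gamma_1}B_{\gamma_2^{-1}}K$ as $A_{\gamma_0^{-1}}K$ so that the fifth and the fourth-plus-sixth terms of the expansion are recognized as $\mu_{i,j}\star 1_j\star d_21_i$ and $-d_2\mu_{i,j}\star 1_j\star 1_i$ respectively.
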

\begin{proof}
Observe that
\[\begin{split}
\nabla^{0,1}KF(p\gamma_1^{-1}\gamma_0^{-1}, \gamma_0, \gamma_1)
=&-KF(p\gamma_1^{-1}\gamma_0^{-1}, \gamma_0)\gamma_1h(p)\\
=&-\int_{Z_{t(\gamma_0)}}h(p)
(A_{\gamma_1}(K(p\gamma_1^{-1}, \gamma_0, q)))F(q)~\mu_{t(\gamma_0)}(q),\\
K\nabla^{0,1}F(p\gamma_1^{-1}\gamma_0^{-1}, \gamma_0, \gamma_1)
=&-\int_{Z_{s(\gamma_0)}}K(p, \gamma_1, q)\nabla^{0,1}F(q\gamma_0^{-1}, \gamma_0)~\mu_{s(\gamma_0)}(q)\\
=&-\int_{Z_{t(\gamma_0)}}K(p, \gamma_1, q\gamma_0)(F(q)\gamma_0)h(q\gamma_0)~\mu_{t(\gamma_0)}(q)\\
=&-\int_{Z_{t(\gamma_0)}}(B_{\gamma_0^{-1}}(K(p, \gamma_1, q\gamma_0)))h(q\gamma_0)F(q)~\mu_{t(\gamma_0)}(q).\end{split}\]
Note that the second last line of the above formula is obtained by renaming $q'\gamma_0=q$ (then replacing $q'$ by $q$).
So we have
$$[\nabla^{0,1}, K](p, \gamma_1, \gamma_0, q)
=-\Big\{h(p)(A_{\gamma_1}(K(p\gamma_1^{-1}, \gamma_0, q)))+(B_{\gamma_0^{-1}}(K(p, \gamma_1, q\gamma_0)))h(q\gamma_0)\Big\}.
$$
Thus
\begin{equation}\label{2022.11.3}
\begin{split}
&Tr_{<e>}([\nabla^{0,1}, K])(\gamma_0;  \gamma_1, \gamma_2)\\
=&-\chi_{G^{(0)}}(\gamma_0\gamma_1\gamma_2)\int_{Z_{t(\gamma_0)}}h(p)
\sum_{\gamma\gamma'=\gamma_2}
h(p\gamma'^{-1})tr\big(A_{\gamma_2}K(p\gamma_2^{-1}, \gamma_1, p)\big)\chi_{G^{(0)}}(\gamma_0)
~\mu_{t(\gamma_0)}(p)\\
&-\chi_{G^{(0)}}(\gamma_0\gamma_1\gamma_2)\int_{Z_{t(\gamma_0)}}h(p)
\sum_{\gamma\gamma'=\gamma_2}
tr\big(A_{\gamma'}B_{\gamma_1^{-1}}K(p\gamma'^{-1}, \gamma, p\gamma_1)\big)h(p\gamma_1)
\chi_{G^{(0)}}(\gamma_0)~\mu_{t(\gamma_0)}(p)
\\
&+\chi_{G^{(0)}}(\gamma_0\gamma_1\gamma_2)\int_{Z_{t(\gamma_0)}}h(p)\sum_{\gamma\gamma'=\gamma_1}
h(p\gamma_2^{-1})tr\big(A_{\gamma^{-1}}K(p\gamma_2^{-1}\gamma'^{-1}, \gamma, p)\big)\chi_{G^{(0)}}(\gamma_0)
~\mu_{t(\gamma_0)}(p)\\
&+\chi_{G^{(0)}}(\gamma_0\gamma_1\gamma_2)\int_{Z_{t(\gamma_0)}}h(p)\sum_{\gamma\gamma'=\gamma_1}
tr\big(A_{\gamma_2}B_{\gamma^{-1}}K(p\gamma_2^{-1}, \gamma', p\gamma)\big)h(p\gamma)
\chi_{G^{(0)}}(\gamma_0)~\mu_{t(\gamma_0)}(p)
\\
&-\chi_{G^{(0)}}(\gamma_0\gamma_1\gamma_2)\int_{Z_{t(\gamma_0)}}h(p)h(p\gamma_2^{-1})tr\big(A_{\gamma_0^{-1}}K(p\gamma_2^{-1}\gamma_1^{-1},  \gamma_0, p)\big)~\mu_{t(\gamma_0)}(p)\\
&-\chi_{G^{(0)}}(\gamma_0\gamma_1\gamma_2)\int_{Z_{t(\gamma_0)}}h(p)tr\big(A_{\gamma_2}B_{\gamma_0^{-1}}K(p\gamma_2^{-1}, \gamma_1, p\gamma_0)\big)h(p\gamma_0)
~\mu_{t(\gamma_0)}(p).\\
\end{split}\end{equation}
We focus on the right-hand-side of \eqref{2022.11.3}. The first term is equal to 
\[\begin{split}
&\chi_{G^{(0)}}(\gamma_0)
\chi_{G^{(0)}}(\gamma_1\gamma_2)\int_{Z_{t(\gamma_1)}}h(p)\Big(-tr\big(A_{\gamma_2}K(p\gamma_2^{-1}, \gamma_1, p)\big)
\Big)~\mu_{t(\gamma_1)}(p)\\
=&
d_2(Tr_{<e>}(K))(\gamma_0; \gamma_1, \gamma_2),
\end{split}\]
since 
$\sum\limits_{\gamma\gamma'=\gamma_2}
h(p\gamma'^{-1})=1$.
The second and third terms are equal to zero by \eqref{2022.7.1}.
We define $\mu\in\Omega_c^{*, 1}(G)$ by
\begin{equation}\label{2024.06.07}
\mu(a; b)=\int_{Z_{t(a)}}h(p)h(pa)
tr\big(A_{b^{-1}}K(pab, b, pa)\big)
~\mu_{t(a)}(p).
\end{equation}
By Proposition \ref{2024.04.08} we can write $\mu=\sum\limits_{i, j}\mu_{i, j}$ with $supp(\mu_{i, j})\subseteq U_i\hat\times V_j$. By Corollary \ref{2024.03.31}, there exist 
$1_i, 1_j\in C_c^{\infty}(G)$ with $supp(1_i)\subseteq U_i^{-1}$ and $supp(1_j)\subseteq V_j^{-1}$.

Then we claim the sum of the 4th, 5th, and 6th terms of \eqref{2022.11.3}  equals
\[
\sum_{i, j}\bigg(
\big[d_21_{i}, \mu_{i, j}\star1_{j}\big]
+\big[1_{i}, d_2\mu_{i, j}\star1_{j}\big]\bigg)
(\gamma_0; \gamma_1, \gamma_2),
\]
which is zero in $\Omega_c^{\bullet}(G)_{\rm{Ab}}$.
Observe that
\[\begin{split}
&\big[d_21_{i}, \mu_{i, j}\star1_{j}\big]
+\big[1_{i}, d_2\mu_{i, j}\star1_{j}\big]\\
=&d_21_{i}\star\mu_{i, j}\star1_{j}+
\mu_{i, j}\star1_{j}\star d_21_{i}+
1_{i}\star d_2\mu_{i, j}\star1_{j}-
d_2\mu_{i, j}\star1_{j}\star 1_{i}\\
=&\mu_{i, j}\star1_{j}\star d_21_{i}-
d_2\mu_{i, j}\star1_{j}\star 1_{i}.
\end{split}\]
We then directly compute
\[\begin{split}
&\sum_{i, j}\big(\mu_{i, j}\star1_{j}\star d_21_{i}\big)(\gamma_0; \gamma_1, \gamma_2)\\
=&\sum_{i, j}\sum_{\gamma\gamma'=\gamma_1}(\mu_{i, j}\star1_{j})(\gamma_0; \gamma)d_21_{i}(\gamma'; \gamma_2)
-\sum_{i, j}\sum_{\gamma\gamma'=\gamma_0}(\mu_{i, j}\star1_{j})(\gamma; \gamma')d_21_{i}(\gamma_1; \gamma_2)\\
=&\sum_{i, j}(\mu_{i, j}\star1_{j})(\gamma_0; \gamma_1)1_{i}(\gamma_2)\\
=&\sum_{i, j}\Big(\sum_{\gamma\gamma'=\gamma_1}\mu_{i, j}(\gamma_0; \gamma)1_{j}(\gamma')-
\sum_{\gamma\gamma'=\gamma_0}\mu_{i, j}(\gamma; \gamma')1_{j}(\gamma_1)\Big)
1_{i}(\gamma_2)\\
=&-\chi_{G^{(0)}}(\gamma_0\gamma_1\gamma_2)\mu
(\gamma_2^{-1}; \gamma_1^{-1}).\\
\end{split}\]
Substituting Equation \eqref{2024.06.07} for $\mu$, the above formula is equal to
\[\begin{split}
&-\chi_{G^{(0)}}(\gamma_0\gamma_1\gamma_2)
\int_{Z_{t(\gamma_0)}}h(p)h(p\gamma_2^{-1})
tr\big(A_{\gamma_1}K(p\gamma_0, \gamma_1^{-1}, p\gamma_2^{-1})\big)
~\mu_{t(\gamma_0)}(p)\\
=&-\chi_{G^{(0)}}(\gamma_0\gamma_1\gamma_2)
\int_{Z_{t(\gamma_0)}}h(p)h(p\gamma_2^{-1})
tr\big(A_{\gamma_1}B_{\gamma_2^{-1}}K(p\gamma_2^{-1}\gamma_1^{-1},  \gamma_0, p)\big)
~\mu_{t(\gamma_0)}(p)\\
=&-\chi_{G^{(0)}}(\gamma_0\gamma_1\gamma_2)
\int_{Z_{t(\gamma_0)}}h(p)h(p\gamma_2^{-1})
tr\big(A_{\gamma_2}A_{\gamma_1}K(p\gamma_2^{-1}\gamma_1^{-1},  \gamma_0, p)\big)
~\mu_{t(\gamma_0)}(p)\\
=&-\chi_{G^{(0)}}(\gamma_0\gamma_1\gamma_2)
\int_{Z_{t(\gamma_0)}}h(p)h(p\gamma_2^{-1})
tr\big(A_{\gamma_0^{-1}}K(p\gamma_2^{-1}\gamma_1^{-1},  \gamma_0, p)\big)
~\mu_{t(\gamma_0)}(p),\\
\end{split}\]
which is the fifth term of \eqref{2022.11.3}.
It remains to compute
\[\begin{split}
&\sum_{i, j}\big(-
d_2\mu_{i, j}\star1_{j}\star 1_{i}\big)(\gamma_0; \gamma_1, \gamma_2)\\
=&\sum_{i, j}\Big(
-\sum_{\gamma\gamma'=\gamma_2}\chi_{G^{(0)}}(\gamma_0)\chi_{G^{(0)}}(\gamma_1\gamma_2)\mu_{i, j}(\gamma_1; \gamma)
+\sum_{\gamma\gamma'=\gamma_1}\chi_{G^{(0)}}(\gamma_0)\chi_{G^{(0)}}(\gamma_1\gamma_2)\mu_{i, j}(\gamma; \gamma')\\
&-\chi_{G^{(0)}}(\gamma_0\gamma_1\gamma_2)\mu_{i, j}(\gamma_0; \gamma_1)\Big)\\
=&\chi_{G^{(0)}}(\gamma_0)\chi_{G^{(0)}}(\gamma_1\gamma_2)
\sum_{\gamma\gamma'=\gamma_1}\int_{Z_{t(\gamma_0)}}h(p)h(p\gamma)
tr\big(A_{\gamma'^{-1}}K(p\gamma_1, \gamma', p\gamma)\big)
~\mu_{t(\gamma_0)}(p)\\
&-\chi_{G^{(0)}}(\gamma_0\gamma_1\gamma_2)
\int_{Z_{t(\gamma_0)}}h(p)h(p\gamma_0)
tr\big(A_{\gamma_1^{-1}}K(p\gamma_2^{-1}, \gamma_1, p\gamma_0)\big)
~\mu_{t(\gamma_0)}(p),\\
\end{split}\]
which is the sum of the fourth and sixth terms of \eqref{2022.11.3}.
We have completed the proof of Theorem \ref{2024.03.04}.
\end{proof}
\noindent{\bf Acknowledgment.}~~
The author would like to thank Prof. Bingkwan So for his
helpful advice and guidance.\\
\bibliographystyle{alpha}
\bibliography{ref}
\end{document}